\documentclass[11pt]{article}
\usepackage[margin=1.15in]{geometry}
\usepackage{amsmath,amssymb,amsthm,mathrsfs}
\usepackage{hyperref}
\hypersetup{hidelinks}
\usepackage{booktabs}
\usepackage{listings}
\theoremstyle{plain}
\newtheorem{theorem}{Theorem}[section]
\newtheorem{proposition}[theorem]{Proposition}
\newtheorem{lemma}[theorem]{Lemma}
\newtheorem{corollary}[theorem]{Corollary}
\theoremstyle{definition}
\newtheorem{definition}[theorem]{Definition}
\newtheorem{remark}[theorem]{Remark}
\newcommand{\Z}{\mathbf Z}
\newcommand{\Zp}{\Z_p}
\newcommand{\Rq}{R_q}
\newcommand{\PGL}{\operatorname{PGL}}
\newcommand{\Mgp}{\mathfrak M}

\title{Principal-unit Di\c{t}\u{a}-type Butson matrices:\\
character-table transitions, fractional-linear automorphisms,\\
Weyl thresholds, and polarized Heisenberg windows}
\author{Richard Scarlini}
\date{}

\begin{document}
\maketitle

\begin{abstract}
For an odd prime \(p\) and integers \(r,q\ge1\) we study the Butson-type complex
Hadamard matrices \(H_{p;r,q}(a,b)=\chi(1+p^{r}ab)\) on \(\Z/p^{q}\Z\), where
\(\chi\) is a faithful character of the principal units
\((1+p^{r}\Zp)/(1+p^{q+r}\Zp)\). The defining phase is the normalized \(p\)-adic
logarithm, which is linear modulo \(p^{q}\) exactly when \(q\le r\) and genuinely
nonlinear once \(q>r\). We prove that this single transition governs four
independent structural properties. The matrix is monomially equivalent to the
character table of a finite abelian group if and only if \(q\le r\); a canonical
cyclic \(2\)-cochain is a cocycle if and only if \(q\le r\); the canonical
M\"obius torsors have projectively scalar Weyl commutator if and only if
\(q\le r\); and in each case the obstruction is the same expression \(p^{r}\)
times a polynomial taking unit values.  Beyond that global Weyl threshold,
when \(q>r\), scalar commutators survive on exactly \(q-r+1\) inclusion-maximal
valuation-polarized windows.  After quotienting their radicals, every such
window carries the same perfect pairing on two cyclic groups of order \(p^r\),
hence the standard finite Heisenberg extension of order \(p^{3r}\).  We classify
the exact normalizer of every maximal window inside the projective monomial
group, prove that distinct polarizations are pairwise nonconjugate there, and
identify the residual unit-torus action through
\((\Z/p^r\Z)^{\times}\).  Our main theorem is a complete classification of the
projective monomial automorphism group: it is the three-parameter
fractional-linear group of order \(p^{2q}\varphi(p^{q})\) described in
Theorem~\ref{thm:auto}, whose compatible inverse limit over \(q\) is the
\(\Gamma_0(p^{r})\)-type congruence subgroup of \(\PGL_2(\Zp)\).
The depth-\(5\) specialization is the quotient-profile family arising in the
Hensel-shell spectral problem; the present paper is the publication owner of the
complete principal-unit projective-monomial classification. As an
application we separate \(H_{p;r,q}\) from the Fourier matrix \(F_{p^{q}}\) for
\(q>r\) by abelianization, an invariant strictly finer than the order of the
automorphism group, which for these two matrices coincides.

The family is recursively Di\c{t}\u{a}-type, and we make no claim that its
members lie outside known equivalence orbits; the order-\(9\) member is a
distinguished arithmetic point of the known \(F_9^{(4)}\) deformation family.
Novelty is claimed for the explicit formula and for the classification results.
For the next nonlinear ternary member, a reproducible computation gives
\(\operatorname{def}(H_{3;1,3})=46\), compared with
\(\operatorname{def}(F_{27})=28\); their complete \(2\times2\)
fingerprint slices nevertheless agree. Among the four nonlinear members we
compute, the defect separates two and the fingerprint separates none, while the
abelianization separates all of them; this is our reason for using it.
\end{abstract}

\section{Introduction}\label{sec:intro}

\subsection{The family}

Let \(p\) be an odd prime and \(r,q\ge1\) integers. Write \(\Rq=\Z/p^{q}\Z\) and
\(\varepsilon=p^{r}\). The principal units \(1+p^{r}\Zp\) form a procyclic group
and
\[
\bigl(1+p^{r}\Zp\bigr)\big/\bigl(1+p^{q+r}\Zp\bigr)\;\cong\;\Z/p^{q}\Z .
\]
Fix a faithful character \(\chi\) of that quotient, valued in the \(p^{q}\)-th
roots of unity.

\begin{definition}\label{def:family}
The \emph{principal-unit kernel} is the \(p^{q}\times p^{q}\) matrix
\[
H_{p;r,q}(a,b)=\chi\bigl(1+\varepsilon ab\bigr),\qquad a,b\in \Rq .
\]
Equivalently, writing
\(\ell_{r,q}(z)=p^{-r}\log\bigl(1+p^{r}z\bigr)\bmod p^{q}\)
for the normalized \(p\)-adic logarithm and \(\omega=e^{2\pi i/p^{q}}\), we have
\(H_{p;r,q}(a,b)=\omega^{\,\ell_{r,q}(ab)}\).
\end{definition}

Two elementary observations fix the shape of everything that follows. First,
\(\ell_{r,q}\) is well defined: the series
\(\ell_{r,q}(z)=z-\tfrac{p^{r}}{2}z^{2}+\tfrac{p^{2r}}{3}z^{3}-\cdots\)
converges \(p\)-adically and depends only on \(z\bmod p^{q}\). Second, and this
is the whole subject of the paper,
\[
\ell_{r,q}(z)\equiv z \pmod{p^{q}}
\quad\Longleftrightarrow\quad q\le r ,
\]
since the first correction term \(-p^{r}z^{2}/2\) survives modulo \(p^{q}\)
precisely when \(q>r\) (recall \(p\) is odd, so \(2\) is invertible).

\subsection{Origin in the Hensel-shell profile and theorem ownership}\label{ssec:origin}
The family is studied here independently for arbitrary depth \(r\), but one
natural occurrence motivated the construction.  In the Hensel-shell spectral
problem of the companion Paper F-I~\cite{PaperFICompanion}, double dephasing of a
fixed active support block produces exactly
\[
H_a(b)=\chi_q(1+p^5ab),
\]
which is the specialization \(H_{p;5,q}\) after the quotient coordinates are
identified with \(R_q\).  Paper F-I proves that model-specific realization and
the regular torsor/scalarity consequences needed for its shell operators.

The complete projective-monomial classification is proved only here, in the
general \(r\)-parameter family.  Paper F-I records a short \(r=5\) corollary but
does not duplicate the proof.  Thus the logical dependency is one-way at theorem
level: the Hensel calculation supplies an occurrence of the kernel, while the
classification below is self-contained and does not rely on the specialized
Paper F argument.

\subsection{Results}

We prove the following. Throughout, \emph{monomial equivalence} means the
two-sided action \(H\mapsto D_1PHQD_2\) by permutation and unimodular diagonal
matrices, in the sense of \cite{Ostergard}, and a \emph{projective monomial
automorphism} is such a pair fixing \(H\) up to a global scalar.

\begin{enumerate}
\item[(I)] \(H_{p;r,q}\) is a \(BH(p^{q},p^{q})\) Butson matrix
(Theorem~\ref{thm:butson}).
\item[(II)] It is monomially equivalent to the character table of a finite
abelian group if and only if \(q\le r\) (Theorem~\ref{thm:chartable}).
\item[(III)] Its projective monomial automorphism group is the fractional-linear
group \(\Mgp_{p;r,q}\) of order \(p^{2q}\varphi(p^{q})\), with
\(\Gamma_0(p^{r})\)-type profinite limit (Theorem~\ref{thm:auto}).
\item[(IV)] The canonical cyclic \(2\)-cochain is a cocycle if and only if
\(q\le r\), with no coboundary repair (Theorem~\ref{thm:cocycle}).
\item[(V)] The canonical M\"obius torsors have projectively scalar Weyl
commutator if and only if \(q\le r\) (Theorem~\ref{thm:weyl}).
\item[(VI)] For \(q>r\), the inclusion-maximal scalar-Weyl rectangles are exactly
\(\alpha+\beta=q-r\); each has a perfect residual pairing on
\((\Z/p^r\Z)^2\) and therefore a standard finite Heisenberg central extension
(Theorem~\ref{thm:polarized}).
\item[(VII)] The exact normalizer of each maximal polarization is the window
extended by diagonal units; distinct polarizations are pairwise nonconjugate
inside \(\Mgp_{p;r,q}\), and the effective residual action is
\((\Z/p^r\Z)^\times\) (Theorem~\ref{thm:normalizer}).
\end{enumerate}

Sections~\ref{sec:butson}--\ref{sec:consequences} prove (I)--(V),
Section~\ref{sec:polarized} proves (VI)--(VII), and
Section~\ref{sec:abelian} gives the abelianization separation from
\(F_{p^{q}}\).

\subsection{One threshold, seen three ways}\label{ssec:one}

The three sharp statements (II), (IV), (V) are not three coincidences. Each
reduces to the vanishing modulo \(p^{q}\) of \(p^{r}\) times a polynomial in the
parameters:
\[
\begin{array}{ll}
\text{character table:} & p^{r}(a_1-a_0)(a_2-a_0)\equiv0,\\[2pt]
\text{cocycle:} & p^{r}abc(a-c)\equiv0,\\[2pt]
\text{Weyl centrality:} & p^{r}cs\equiv0 .
\end{array}
\]
In each case the polynomial attains unit values, so the condition holds
identically exactly when \(p^{r}\equiv0\) in \(\Rq\), i.e.\ when \(q\le r\); and
in each case the first failure occurs at \(q=r+1\) with an explicit unit witness.
The global threshold is therefore a single transition seen through three
structures.  Theorems~\ref{thm:polarized} and~\ref{thm:normalizer} describe what
remains on the nonlinear side: global canonical Weyl centrality is lost, but a
finite valuation filtration of maximal scalar windows survives, each with the
same nondegenerate residual Heisenberg pairing and a rigid normalizer.

\subsection{Relation to known constructions}\label{ssec:known}

The family is \emph{recursively Di\c{t}\u{a}-type} \cite{Dita}. Writing
\(a=p^{q-r}A+i\) and \(b=j+p^{q-r}B\) one has the exact block identity
\[
\ell_{r,q}\bigl(a(j+p^{q-r}B)\bigr)\equiv\ell_{r,q}(aj)+p^{q-r}iB
\pmod{p^{q}},
\tag{1.1}\label{eq:dita}
\]
so each block is a diagonal phase times a Fourier matrix \(F_{p^{r}}\), and the
recursion iterates. The smallest nonlinear member, the order-\(9\) matrix at
\((p,r,q)=(3,1,2)\), lies in the known \(F_9^{(4)}\) Fourier/Di\c{t}\u{a}
deformation orbit and has defect \(10\)
\cite{Karlsson,TadejZyczkowskiDefect}; at that order it is thus a distinguished
arithmetic point in an already-mapped deformation geometry, and we assert no new
order-\(9\) equivalence class.

Our positioning is the following. Although the matrices admit an iterated
Di\c{t}\u{a} decomposition, we have not found the principal-unit kernel
\(\chi(1+p^{r}ab)\) identified as an all-parameter family in the existing
catalogues or construction literature
\cite{TadejZyczkowski,LampioOstergardSzollosi,DoDuc,DoDucSchmidt}; at small
orders its members may nevertheless belong to previously known equivalence
orbits. \emph{This is a negative literature-search conclusion, not a proof of
nonexistence}: the formula could be concealed by dephasing or reparametrization
at a particular order.

The two sides of the threshold have different conceptual homes. For \(q\le r\)
the kernel is biadditive and the family falls within the established theory of
Butson matrices from bilinear forms and group-invariant constructions over finite
local rings \cite{DoDuc,DoDucSchmidt}. For \(q>r\) the surviving logarithmic
terms make the phase non-biadditive on the additive index group, placing the
family outside that framework even though the matrices remain recursively
Di\c{t}\u{a}-type. Prior work thus explains the easy side; the content of
Theorem~\ref{thm:chartable} is an invariant proving impossibility on the other.

Complete automorphism groups of Hadamard families have been determined in
several cases --- Paley type II \cite{deLauneyStafford}, generalized Sylvester
\cite{EganFlannery} --- and projective-linear symmetry is not itself new in this
literature. What appears new in Theorem~\ref{thm:auto} is the finite-ring
M\"obius normal form together with its \(p\)-adic congruence limit.

\section{The Butson property}\label{sec:butson}

\begin{theorem}\label{thm:butson}
For every odd prime \(p\) and all \(r,q\ge1\), the matrix \(H=H_{p;r,q}\) has
entries that are \(p^{q}\)-th roots of unity and satisfies \(HH^{*}=p^{q}I\).
That is, \(H\) is a \(BH(p^{q},p^{q})\) matrix in the sense of Butson
\cite{Butson}.
\end{theorem}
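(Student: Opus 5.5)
The entry statement is immediate from the definition: \(\chi\) takes values in the \(p^{q}\)-th roots of unity, so every \(H(a,b)=\chi(1+\varepsilon ab)\) is a \(p^{q}\)-th root of unity. The real content is orthogonality of distinct rows. Since \(\chi\) is a homomorphism on the principal-unit quotient, the \((a,a')\) entry of \(HH^{*}\) is
\[
(HH^{*})_{a,a'}=\sum_{b\in\Rq}\chi\bigl(1+\varepsilon ab\bigr)\overline{\chi\bigl(1+\varepsilon a'b\bigr)}
=\sum_{b\in\Rq}\chi\!\left(\frac{1+\varepsilon ab}{1+\varepsilon a'b}\right).
\]
For \(a=a'\) every term is \(1\), giving \(p^{q}\).

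For \(a\neq a'\), I would reparametrize the sum so that it becomes the full sum of a nontrivial character. Write
\[
u_b=\frac{1+\varepsilon ab}{1+\varepsilon a'b}=1+\varepsilon\,\frac{(a-a')b}{1+\varepsilon a'b}.
\]
Consider the map \(\phi(b)=(a-a')b/(1+\varepsilon a'b)\), read modulo \(p^{q}\). This is a M\"obius-type map on \(\Rq\). The plan is to show that \(b\mapsto u_b\) hits each coset of \(1+p^{r+v}\Zp\) in the quotient equally often, where \(v=v_p(a-a')<q\).

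The cleanest route is to substitute \(b\mapsto b/(1-\varepsilon a'b)\). This map is a bijection of \(\Rq\), with inverse \(b\mapsto b/(1+\varepsilon a'b)\), and both denominators are units because \(r\ge1\). Under this substitution \(1+\varepsilon a'b\) becomes \(1/(1-\varepsilon a'b)\), and \(u_b\) becomes \(1+\varepsilon(a-a')b\). This is the key algebraic step and is a direct computation. The sum therefore equals
\[
\sum_{b\in\Rq}\chi\bigl(1+\varepsilon(a-a')b\bigr),
\]
which is the row sum of row \(a-a'\). The claim thus reduces to showing that row \(c=a-a'\neq0\) has zero sum.

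Now write \(c=p^{v}c_0\) with \(c_0\) a unit and \(v<q\). The elements \(1+\varepsilon cb\), as \(b\) ranges over \(\Rq\), run over \(1+p^{r+v}\Zp\) modulo \(1+p^{q+r}\Zp\). Because \(c_0\) is a unit, each of the \(p^{q-v}\) classes of this subgroup is hit exactly \(p^{v}\) times. The sum is therefore \(p^{v}\) times the sum of \(\chi\) over the subgroup \((1+p^{r+v}\Zp)/(1+p^{q+r}\Zp)\). That subgroup has order \(p^{q-v}>1\). Since \(\chi\) is faithful on the quotient, its restriction to this subgroup is nontrivial, so the sum vanishes.

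I expect the main obstacle to be the bookkeeping in the covering claim: verifying that \(b\mapsto 1+\varepsilon cb\) is \(p^{v}\)-to-one onto the subgroup, and that the image really is that subgroup and not merely a coset-set of it. This follows because \(1+p^{r+v}x\), for \(x\) running over \(\Z/p^{q-v}\Z\), is a complete set of representatives of \((1+p^{r+v}\Zp)/(1+p^{q+r}\Zp)\). That in turn is just the statement that \(1+p^{r+v}x\equiv 1+p^{r+v}y \pmod{p^{q+r}}\) if and only if \(x\equiv y\pmod{p^{q-v}}\).
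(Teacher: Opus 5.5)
Your proposal is correct and is essentially the paper's proof. Both write each inner-product term as \(\chi\bigl(1+\varepsilon(a-a')b/(1+\varepsilon a'b)\bigr)\), use the principal-unit M\"obius bijection to linearize the argument, and conclude from the vanishing of a complete sum of the faithful, hence nontrivial, character on \((1+p^{r+v}\Zp)/(1+p^{q+r}\Zp)\). The differences are cosmetic: you make the substitution \(b\mapsto b/(1-\varepsilon a'b)\) explicit and reduce to the row sum of row \(a-a'\), where the paper only asserts that \(m\) is \(p^{\kappa}\) times a bijection onto \(p^{\kappa}\Rq\), and you work with rows rather than columns, which is immaterial since \(H\) is symmetric.
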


\begin{proof}
The entries lie in \(\mu_{p^{q}}\) by construction. For \(b,b'\in\Rq\),
multiplicativity of \(\chi\) gives
\[
\sum_{a\in\Rq}H(a,b)\overline{H(a,b')}
=\sum_{a\in\Rq}\chi\!\left(\frac{1+\varepsilon ab}{1+\varepsilon ab'}\right)
=\sum_{a\in\Rq}\chi\bigl(1+\varepsilon m(a)\bigr),
\qquad
m(a)=\frac{a(b-b')}{1+\varepsilon ab'} .
\]
If \(b=b'\) every summand is \(1\) and the sum is \(p^{q}\). Suppose \(b\neq b'\)
and put \(\kappa=v_p(b-b')\), so \(0\le\kappa<q\). The denominator
\(1+\varepsilon ab'\) is a principal unit, so \(a\mapsto m(a)\) is \(p^{\kappa}\)
times a bijection of \(\Rq\) onto \(p^{\kappa}\Rq\), each value being attained
exactly \(p^{\kappa}\) times. Hence
\[
\sum_{a\in\Rq}\chi\bigl(1+\varepsilon m(a)\bigr)
=p^{\kappa}\sum_{n\in R_{q-\kappa}}\chi\bigl(1+\varepsilon p^{\kappa}n\bigr)=0,
\]
the inner sum being a complete sum of a character of
\(\bigl(1+p^{r+\kappa}\Zp\bigr)/\bigl(1+p^{q+r}\Zp\bigr)\) which is nontrivial
there because \(\chi\) is faithful on
\(\bigl(1+p^{r}\Zp\bigr)/\bigl(1+p^{q+r}\Zp\bigr)\) and \(\kappa<q\).
\end{proof}

\section{The automorphism group}\label{sec:auto}

This section contains the main theorem. We write the rows of \(H\) as functions
\[
H_a(b)=\chi(1+\varepsilon ab),\qquad a\in\Rq ,
\]
and note \(H_0\equiv1\) and \(H_a(0)=1\).

\begin{definition}\label{def:autgroup}
A \emph{projective monomial automorphism} of \(H\) is a quadruple
\((\pi,\sigma,\alpha,\omega)\) consisting of permutations \(\pi,\sigma\) of
\(\Rq\) and unimodular functions \(\alpha,\omega\) on \(\Rq\) such that
\[
H_a\bigl(\sigma(b)\bigr)\,\omega(b)=\alpha_a\,H_{\pi(a)}(b)
\qquad\text{for all }a,b\in\Rq .
\tag{3.1}\label{eq:autdef}
\]
Two such quadruples are identified when they differ by a global scalar. The set
of these, with composition, is the group \(\Mgp_{p;r,q}\).
\end{definition}

\begin{theorem}[Complete classification]\label{thm:auto}
Let \(p\) be odd and \(r,q\ge1\). Every projective monomial automorphism of
\(H_{p;r,q}\) is of the form
\[
\sigma(b)=\frac{s+\lambda b}{1+\varepsilon cb},
\qquad
\pi(a)=\frac{c+\lambda a}{1+\varepsilon sa},
\qquad
\omega(b)=H_c(b),\qquad \alpha_a=H_a(s),
\tag{3.2}\label{eq:normalform}
\]
for a unique triple \((c,s,\lambda)\in\Rq\times\Rq\times\Rq^{\times}\), and every
such triple defines an automorphism. Hence
\[
\Mgp_{p;r,q}\;\cong\;\bigl\{(c,s,\lambda):c,s\in\Rq,\ \lambda\in\Rq^{\times}\bigr\},
\qquad
\bigl|\Mgp_{p;r,q}\bigr|=p^{2q}\varphi(p^{q}).
\]
The composition law is
\[
(c,s,\lambda)\cdot(c',s',\lambda')
=\Bigl(\tfrac{c+\lambda c'}{u},\ \tfrac{\lambda's+s'}{u},\
\tfrac{\lambda\lambda'+\varepsilon cs'}{u}\Bigr),
\qquad u=1+\varepsilon sc' .
\tag{3.3}\label{eq:complaw}
\]
\end{theorem}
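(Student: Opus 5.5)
The plan is to prove sufficiency by a direct multiplicativity identity, and necessity by normalizing an arbitrary automorphism with the standard ones until it becomes a pure scaling.

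\emph{Sufficiency.} Fix \((c,s,\lambda)\). First check that \(\sigma\) and \(\pi\) in (\ref{eq:normalform}) are permutations of \(\Rq\). The denominators \(1+\varepsilon cb\) and \(1+\varepsilon sa\) are principal units, and \(\sigma\) is the M\"obius map of the matrix \(\begin{pmatrix}\lambda&s\\ \varepsilon c&1\end{pmatrix}\), whose determinant \(\lambda-\varepsilon cs\) is a unit. Hence \(\sigma\) is invertible with inverse of the same shape, and likewise for \(\pi\). The key identity is the factorization
\[
1+\varepsilon a\,\frac{s+\lambda b}{1+\varepsilon cb}
=\frac{(1+\varepsilon as)\bigl(1+\varepsilon b\,\pi(a)\bigr)}{1+\varepsilon cb},
\]
which holds because both numerators equal \(1+\varepsilon as+\varepsilon cb+\varepsilon\lambda ab\). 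Applying the multiplicative character \(\chi\) gives \(H_a(\sigma(b))=H_a(s)\,H_{\pi(a)}(b)\,H_c(b)^{-1}\). This is exactly (\ref{eq:autdef}) with \(\omega=H_c\) and \(\alpha_a=H_a(s)\).

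\emph{Necessity.} Let \((\pi,\sigma,\alpha,\omega)\) be any automorphism.
\begin{enumerate}
\item Put \(a=0\) in (\ref{eq:autdef}). Since \(H_0\equiv1\), this gives \(\omega=\alpha_0H_{c}\) with \(c=\pi(0)\).
\item Put \(b=0\). Since \(H_a(0)=1\), this gives \(\alpha_a=\omega(0)H_a(s)\) with \(s=\sigma(0)\).
\item Compose with the inverse of the standard automorphism attached to \((c,s,1)\). This reduces to the case \(\pi(0)=0\), \(\sigma(0)=0\) with \(\alpha,\omega\) constant, and the constants are removed projectively. The reduced condition is \(\omega^{\ell_{r,q}(a\sigma(b))}=\omega^{\ell_{r,q}(\pi(a)b)}\) for all \(a,b\).
\item The crucial input is that \(\ell_{r,q}\) is a bijection of \(\Rq\). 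Indeed, \(z\mapsto 1+p^rz\) followed by \(\chi\) realizes the isomorphism \(\Rq\cong(1+p^r\Zp)/(1+p^{q+r}\Zp)\cong\Z/p^q\Z\) from the introduction. Therefore \(a\sigma(b)\equiv\pi(a)b\) in \(\Rq\).
\item Set \(a=1\) and \(\lambda=\pi(1)\). Then \(\sigma(b)=\lambda b\). Because \(\sigma\) is a bijection, \(\lambda\) is a unit, and then \(\pi(a)=\lambda a\).
\end{enumerate}
Recomposing, the original automorphism is the composite of \((c,s,1)\) and \((0,0,\lambda)\). A short check shows that this composite has the normal form (\ref{eq:normalform}) for some triple, possibly after re-labelling \(\lambda\).

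\emph{Uniqueness and counting.} The triple is recovered from the automorphism as \(s=\sigma(0)\), \(c=\pi(0)\), and \(\lambda\) from \(\sigma'\) at \(0\), i.e.\ from \(\sigma(1)\) after normalization. Distinct triples therefore give distinct permutation pairs, and hence distinct elements of \(\Mgp_{p;r,q}\) even modulo scalars. This gives the order \(p^{2q}\varphi(p^q)\).

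\emph{Composition law.} For (\ref{eq:complaw}) I would multiply the associated matrices \(\begin{pmatrix}\lambda&s\\ \varepsilon c&1\end{pmatrix}\) for \(\sigma\), and the transposed-type matrices for \(\pi\), and then rescale by the unit \(u=1+\varepsilon sc'\) so that the lower-right entry returns to \(1\). I would confirm the result by evaluating at \(b=0\) and \(a=0\).

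The main obstacle is not conceptual but bookkeeping. I need to fix the composition convention in (\ref{eq:autdef}), and decide whether \(\sigma\) acts on the right so that the matrix product order flips, so that the formula comes out exactly as stated. I must also make sure the scalars \(\alpha,\omega\) compose consistently with the normal form \(\omega=H_{c''}\), \(\alpha=H_\cdot(s'')\), which follows from uniqueness in the necessity step.
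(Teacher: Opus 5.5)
Your sufficiency identity is correct; it is the paper's congruence \eqref{eq:exact} read backwards. Your Steps 1, 2, 4 and 5 are also sound. Step 3, however, is false as stated, and the recomposition inherits the error.

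Substituting one instance of \eqref{eq:autdef} into another shows that composing two automorphisms composes the \(\pi\)'s and the \(\sigma\)'s in \emph{opposite} orders. So composing \(f\) on one side with the inverse of a single element \(g=(c,s,1)\) sends only one base point to \(0\); the other becomes \(\sigma_f(-s)\) or \(\pi_f(-c)\). In the triple law, \((c,s,1)^{-1}=(-c,-s,1)\) and
\[
(-c,-s,1)\cdot(c,s,\lambda)=\Bigl(0,\tfrac{s(1-\lambda)}{1-\varepsilon cs},\tfrac{\lambda-\varepsilon cs}{1-\varepsilon cs}\Bigr),
\qquad
(c,s,\lambda)\cdot(-c,-s,1)=\Bigl(\tfrac{c(1-\lambda)}{1-\varepsilon cs},0,\tfrac{\lambda-\varepsilon cs}{1-\varepsilon cs}\Bigr),
\]
and neither is a dilation in general (take \(s=c=1\), \(\lambda=2\)). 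Likewise \((c,s,1)\cdot(0,0,\lambda)=(c,\lambda s,\lambda)\) and \((0,0,\lambda)\cdot(c,s,1)=(\lambda c,s,\lambda)\). Relabelling \(\lambda\) cannot repair the \(s\)- or \(c\)-coordinate.

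The repair is to use the two torsors separately and on opposite sides:
\begin{itemize}
\item \((c,0,1)\) has \(\pi(a)=a+c\) and fixes the column base point \(0\);
\item \((0,s,1)\) has \(\sigma(b)=b+s\) and fixes the row base point \(0\).
\end{itemize}
Cancel \(c\) with \((c,0,1)^{-1}\) on one side and \(s\) with \((0,s,1)^{-1}\) on the other. This yields an element fixing both base points, with constant \(\alpha,\omega\). Your Steps 4--5 then make it the dilation by a unit \(\mu\). Composing the three M\"obius maps gives
\[
\sigma_f(b)=s+\frac{\mu b}{1+\varepsilon cb},\qquad \pi_f(a)=c+\frac{\mu a}{1+\varepsilon sa}.
\]
This is \eqref{eq:normalform} with \(\lambda=\mu+\varepsilon cs\), which is a unit.

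This repaired route requires you first to prove that quadruples compose, and on which side each \(\pi\) and \(\sigma\) is composed. It does not need \eqref{eq:complaw}, and it exhibits every automorphism as \((0,s,1)\), a dilation, and \((c,0,1)\).

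The paper avoids group structure entirely. After your Steps 1--2 it rescales so that \(\alpha_0=1\) and substitutes \(\omega=H_c\), \(\alpha_a=H_a(s)\) into \eqref{eq:autdef}. Faithfulness then gives
\[
(1+\varepsilon a\sigma(b))(1+\varepsilon cb)\equiv(1+\varepsilon as)(1+\varepsilon\pi(a)b)\pmod{p^{q+r}}.
\]
Setting \(a=1\) with \(\lambda:=\pi(1)(1+\varepsilon s)-c\) solves for \(\sigma\), and substituting back yields \(\pi\).
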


\begin{proof}
\emph{Normalization.} Put \(a=0\) in \eqref{eq:autdef}. Since \(H_0\equiv1\) we
get \(\omega(b)=\alpha_0H_{\pi(0)}(b)\); rescaling by the global scalar
\(\alpha_0^{-1}\) we may assume \(\alpha_0=1\), so that
\[
\omega(b)=H_c(b),\qquad c:=\pi(0).
\]
Put \(b=0\). Since \(H_{\pi(a)}(0)=1\) and \(\omega(0)=1\) we get
\[
\alpha_a=H_a(s),\qquad s:=\sigma(0).
\]

\emph{The exact identity.} Substituting these into \eqref{eq:autdef} and using
multiplicativity of \(\chi\),
\[
\chi\Bigl(\bigl(1+\varepsilon a\sigma(b)\bigr)\bigl(1+\varepsilon cb\bigr)\Bigr)
=\chi\Bigl(\bigl(1+\varepsilon as\bigr)\bigl(1+\varepsilon \pi(a)b\bigr)\Bigr).
\]
Both arguments lie in \(1+p^{r}\Zp\), and \(\chi\) is faithful on
\(\bigl(1+p^{r}\Zp\bigr)/\bigl(1+p^{q+r}\Zp\bigr)\); therefore
\[
\bigl(1+\varepsilon a\sigma(b)\bigr)\bigl(1+\varepsilon cb\bigr)
\equiv\bigl(1+\varepsilon as\bigr)\bigl(1+\varepsilon\pi(a)b\bigr)
\pmod{p^{q+r}} .
\tag{3.4}\label{eq:exact}
\]
This is the only consequence of \eqref{eq:autdef} that we use, and it is
equivalent to it.

\emph{Step 1: \(\sigma\) is determined.} Set \(a=1\) in \eqref{eq:exact} and
define \(\lambda:=\pi(1)\bigl(1+\varepsilon s\bigr)-c\). Expanding the
right-hand side,
\[
\bigl(1+\varepsilon s\bigr)\bigl(1+\varepsilon\pi(1)b\bigr)
=1+\varepsilon s+\varepsilon b\,\pi(1)\bigl(1+\varepsilon s\bigr)
=1+\varepsilon s+\varepsilon b\,(c+\lambda),
\]
while the left-hand side is
\(1+\varepsilon\sigma(b)\bigl(1+\varepsilon cb\bigr)+\varepsilon cb\). Comparing
and cancelling \(\varepsilon\),
\[
\sigma(b)\bigl(1+\varepsilon cb\bigr)=s+\lambda b ,
\]
which is the first half of \eqref{eq:normalform}, the denominator being a
principal unit and hence invertible.

\emph{Step 2: \(\pi\) is determined.} Substituting the formula for \(\sigma\)
back into the left-hand side of \eqref{eq:exact} for general \(a\),
\[
\bigl(1+\varepsilon a\sigma(b)\bigr)\bigl(1+\varepsilon cb\bigr)
=1+\varepsilon a\bigl(s+\lambda b\bigr)+\varepsilon cb
=1+\varepsilon as+\varepsilon b\,(a\lambda+c),
\]
whereas the right-hand side equals
\(1+\varepsilon as+\varepsilon\pi(a)b\bigl(1+\varepsilon as\bigr)\). Since this
holds for all \(b\), we get \(\pi(a)\bigl(1+\varepsilon sa\bigr)=c+\lambda a\),
which is the second half of \eqref{eq:normalform}.

\emph{Step 3: \(\lambda\) is a unit, and conversely.} The map \(\sigma\) is the
M\"obius transformation with matrix
\(N=\left(\begin{smallmatrix}\lambda&s\\ \varepsilon c&1\end{smallmatrix}\right)\),
of determinant \(\lambda-\varepsilon cs\). A M\"obius map over the local ring
\(\Rq\) is a bijection precisely when its determinant is a unit. Since
\(\varepsilon=p^{r}\) with \(r\ge1\) we have
\(\lambda-\varepsilon cs\equiv\lambda\pmod p\), so the determinant is a unit if
and only if \(\lambda\in\Rq^{\times}\). Conversely, given any
\((c,s,\lambda)\in\Rq\times\Rq\times\Rq^{\times}\), the formulas
\eqref{eq:normalform} define permutations \(\sigma,\pi\) and unimodular
\(\alpha,\omega\); reversing Steps 1--2 shows \eqref{eq:exact} holds identically,
hence so does \eqref{eq:autdef}. Uniqueness of the triple is immediate from
\(c=\pi(0)\), \(s=\sigma(0)\), and
\(\lambda=\pi(1)(1+\varepsilon s)-c\).

\emph{Composition.} Representing \(\pi\) by
\(M=\left(\begin{smallmatrix}\lambda&c\\ \varepsilon s&1\end{smallmatrix}\right)\)
and composing, \(MM'\) has lower-right entry \(u=1+\varepsilon sc'\), a unit;
renormalizing that entry to \(1\) gives \eqref{eq:complaw}.
\end{proof}

\begin{corollary}[Profinite limit]\label{cor:profinite}
The maps \eqref{eq:normalform} are compatible with reduction \(\Rq\to R_{q-1}\),
and the inverse limit over \(q\) is
\[
\varprojlim_q\Mgp_{p;r,q}
\;\cong\;
\left\{\begin{pmatrix}\lambda&s\\ p^{r}c&1\end{pmatrix}
: c,s\in\Zp,\ \lambda\in\Zp^{\times}\right\}\subseteq\PGL_2(\Zp),
\]
the congruence subgroup cut out by \(v_p(\text{lower-left})\ge r\), that is a
\(\Gamma_0(p^{r})\)-type subgroup.
\end{corollary}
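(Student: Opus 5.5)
The plan is to work directly with the normal form \eqref{eq:normalform} and the composition law \eqref{eq:complaw} from Theorem~\ref{thm:auto}, and with the matrix representation of \(\sigma\) (or \(\pi\)) used in its proof. There are three steps: check compatibility with reduction, identify the inverse limit as a group of \(2\times2\) matrices, and identify that group inside \(\PGL_2(\Zp)\).

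\emph{Compatibility.} Let \(\rho_q:\Rq\to R_{q-1}\) be reduction. I would send the triple \((c,s,\lambda)\) at level \(q\) to \((\rho_q c,\rho_q s,\rho_q\lambda)\) at level \(q-1\). Units reduce to units, so this lands in the parameter set of \(\Mgp_{p;r,q-1}\). The formulas \eqref{eq:normalform} are rational expressions in \(c,s,\lambda\) whose denominators \(1+\varepsilon cb\) and \(1+\varepsilon sa\) are principal units, so they commute with reduction. The composition law \eqref{eq:complaw} has the same feature, since \(u=1+\varepsilon sc'\) is a unit. Hence the reduction maps \(\Mgp_{p;r,q}\to\Mgp_{p;r,q-1}\) are surjective group homomorphisms.

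There is one subtlety: the character \(\chi\) depends on \(q\), so the reduction map should be defined through the parameters rather than through \(H\) itself. I would state explicitly that compatibility means compatibility of the parametrizations \((c,s,\lambda)\) and of the group laws. With that reading, the fact that \(\Mgp_{p;r,q-1}\) is the automorphism group of a different matrix causes no problem.

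\emph{The limit.} The parameter spaces form an inverse system \(\Rq\times\Rq\times\Rq^\times\) with limit \(\Zp\times\Zp\times\Zp^\times\). The composition law passes to the limit, so the inverse limit is \(\Zp^2\times\Zp^\times\) with the law \eqref{eq:complaw} read over \(\Zp\).

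Next I would map \((c,s,\lambda)\) to the class of \(N=\begin{pmatrix}\lambda&s\\ p^rc&1\end{pmatrix}\) in \(\PGL_2(\Zp)\), which is the matrix of \(\sigma\). I would check directly that \(N N'\), after scaling its lower-right entry to \(1\), reproduces \eqref{eq:complaw}. Here one must be careful about which of the two matrices, that of \(\sigma\) or that of \(\pi\), gives a homomorphism rather than an anti-homomorphism. The proof of Theorem~\ref{thm:auto} used \(M\) for \(\pi\). If \(N\) gives an anti-homomorphism, I would instead use \(M\), or the transpose, or compose with inversion. Any of these identifies the limit with a subgroup of the same shape, after swapping the roles of \(c\) and \(s\).

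\emph{Image and injectivity.} The determinant \(\lambda-p^rcs\) is a unit, so \(N\in\mathrm{GL}_2(\Zp)\). The map is injective: two such matrices with lower-right entry \(1\) can only be scalar multiples of each other if the scalar is \(1\).

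For the image, take any \(g=\begin{pmatrix}x&y\\z&w\end{pmatrix}\in\mathrm{GL}_2(\Zp)\) with \(v_p(z)\ge r\). Since \(\det g\equiv xw\pmod p\) is a unit, \(w\) is a unit. Dividing by \(w\) gives the normal form, and its upper-left entry \(x/w\) is a unit. So the image is exactly the image in \(\PGL_2(\Zp)\) of \(\{g\in\mathrm{GL}_2(\Zp): v_p(z)\ge r\}\), which is the \(\Gamma_0(p^r)\)-type subgroup.

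\emph{Main obstacle.} I expect the hardest part to be bookkeeping rather than anything conceptual. One must fix the orientation (homomorphism versus anti-homomorphism) consistently, and one must make precise the meaning of the inverse limit given that \(\chi\) changes with \(q\).
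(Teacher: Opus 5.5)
Your proposal is correct and follows the paper's route. You check that the parametrization $(c,s,\lambda)$ is compatible with reduction, and then observe that a unit-determinant matrix with lower-left entry divisible by $p^{r}$ has a unit lower-right entry, so it normalizes to the displayed form. Your caution about orientation is justified and resolves as you suspected: for the law \eqref{eq:complaw} the $\sigma$-matrix satisfies $N(g)N(g')=N(g'g)$, whereas the $\pi$-matrix $M$ from the paper's composition step is a homomorphism onto the same set of matrices with $c$ and $s$ interchanged, so you should use $M$ (or $g\mapsto N(g)^{-1}$).
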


\begin{proof}
Compatibility is immediate from \eqref{eq:normalform}. For the identification,
note that a matrix \(\left(\begin{smallmatrix}a&b\\ c&d\end{smallmatrix}\right)\)
over \(\Zp\) with \(v_p(c)\ge r\ge1\) and unit determinant has \(d\) a unit,
since \(\det\equiv ad\pmod{p^{r}}\); dividing through by \(d\) puts it in the
displayed normal form.
\end{proof}

\begin{remark}\label{rem:nodepth}
We do not identify this subgroup with a Moy--Prasad filtration term
\cite{MoyPrasad}, which would require a specified point of the Bruhat--Tits
building and a filtration parameter. The statement here is the explicit
congruence condition and nothing more.
\end{remark}

\section{Consequences of the classification}\label{sec:consequences}

\subsection{The character-table threshold}

\begin{definition}\label{def:closure}
Say that \(H\) has \emph{projective row-ratio closure} at base row \(a_0\) if for
all \(a_1,a_2\in\Rq\) there are \(a_3\in\Rq\) and a unimodular constant
\(\gamma=\gamma(a_0,a_1,a_2)\) such that, pointwise in the column coordinate,
\[
\bigl(H_{a_1}/H_{a_0}\bigr)\cdot\bigl(H_{a_2}/H_{a_0}\bigr)
=\gamma\,H_{a_3}/H_{a_0}.
\]
The terminology is introduced here; this projective form is the
monomial-equivalence invariant version of the row-group property of a character
table.
\end{definition}

\begin{lemma}\label{lem:closure}
Projective row-ratio closure at \(a_0\) holds if and only if
\[
p^{r}(a_1-a_0)(a_2-a_0)\equiv0 \pmod{p^{q}}
\qquad\text{for all }a_1,a_2\in\Rq .
\]
It is preserved by monomial equivalence.
\end{lemma}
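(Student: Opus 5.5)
The plan is to reduce closure to an identity between principal units, using faithfulness of \(\chi\) exactly as in the proof of Theorem~\ref{thm:auto}.

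\textbf{Ratios and the phase.} Write \(x=a_1-a_0\), \(y=a_2-a_0\). Multiplicativity of \(\chi\) gives
\[
\frac{H_{a_1}(b)}{H_{a_0}(b)}=\chi\bigl(1+\varepsilon x\,\tau(b)\bigr),
\qquad \tau(b)=\frac{b}{1+\varepsilon a_0 b}.
\]
Here \(\tau\) is a bijection of \(\Rq\), by Step~3 of Theorem~\ref{thm:auto}. Substituting \(t=\tau(b)\), closure becomes the following condition: for all \(x,y\) there are \(z\) and a constant \(\gamma\) with
\[
\chi\bigl((1+\varepsilon xt)(1+\varepsilon yt)\bigr)=\gamma\,\chi(1+\varepsilon zt)\quad\text{for all } t\in\Rq .
\]
The value \(t=0\) forces \(\gamma=1\). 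Faithfulness of \(\chi\) then turns the condition into the congruence
\[
\varepsilon^2xyt^2+\varepsilon(x+y-z)t\equiv0 \pmod{p^{q+r}}\quad\text{for all } t.
\]
After dividing by \(\varepsilon\), this reads \(\varepsilon xy\,t^2+(x+y-z)t\equiv0\pmod{p^q}\) for all \(t\).

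\textbf{Equivalence.} If \(\varepsilon xy\equiv0\pmod{p^q}\), take \(z=x+y\); this gives \(a_3=a_1+a_2-a_0\). Conversely, put \(t=1\) to get \(z\equiv x+y-\varepsilon xy\). Substituting this into \(t=2\) gives \(4\varepsilon xy-2\varepsilon xy=2\varepsilon xy\equiv0\). Since \(p\) is odd, \(2\) is invertible, so \(\varepsilon xy\equiv0\). Letting \(a_1,a_2\) vary yields the stated criterion. The criterion is the same for every base row \(a_0\), because \(x,y\) already range over all of \(\Rq\).

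\textbf{Invariance.} This is the step I expect to need the most care. Suppose \(H'=D_1PHQD_2\). The rows of \(H'\) are \(\delta_a\cdot(H_{\pi(a)}\circ\sigma)\cdot\omega\) for a row permutation \(\pi\), a column permutation \(\sigma\), row scalars \(\delta_a\) and a column phase function \(\omega\).

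In the quotient \(H'_{a_1}/H'_{a_0}\), the column phase \(\omega\) cancels. What remains is a constant times \(\bigl(H_{\pi(a_1)}/H_{\pi(a_0)}\bigr)\circ\sigma\). Composing with \(\sigma\) respects pointwise products, and constants are absorbed into \(\gamma\). Hence closure of \(H'\) at \(a_0\) is equivalent to closure of \(H\) at \(\pi(a_0)\).

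Because the criterion does not depend on the base row, the property ``closure at some (equivalently every) base row'' is a monomial invariant. This is the sense in which the lemma asserts preservation.
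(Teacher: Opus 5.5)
Your proof is correct and takes the same route as the paper. You set \(\gamma=1\) at \(b=0\), use faithfulness of \(\chi\) to get a congruence modulo \(p^{q+r}\) between products of principal units, and expand and cancel. For invariance, column phases cancel, row phases are absorbed into \(\gamma\), and permutations only relabel the base row and the coordinate. Your substitution \(t=\tau(b)\) and the evaluations at \(t=1,2\) just spell out the paper's ``expanding and cancelling.''

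There is one small sign slip: \(t=1\) gives \(z\equiv x+y+\varepsilon xy\), not \(x+y-\varepsilon xy\). Your next step \(4\varepsilon xy-2\varepsilon xy\) already uses the correct sign. With the sign as written you would get \(6\varepsilon xy\equiv0\), which does not yield the conclusion when \(p=3\).
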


\begin{proof}
For the displayed dephased matrix, evaluation at \(b=0\) forces
\(\gamma=1\).  Multiplicativity and faithfulness of \(\chi\), exactly as in the
proof of Theorem~\ref{thm:auto}, then reduce the closure requirement to a
congruence modulo \(p^{q+r}\) between products of principal units; expanding and
cancelling gives the displayed condition.  Under monomial equivalence, column
scalings cancel in every row ratio, row scalings contribute only a unimodular
constant \(\gamma\), and row/column permutations merely relabel the rows and the
pointwise coordinate.  Thus projective row-ratio closure is invariant.
\end{proof}

\begin{theorem}[Sharp character-table transition]\label{thm:chartable}
\(H_{p;r,q}\) is monomially equivalent to the character table of a finite abelian
group if and only if \(q\le r\).
\end{theorem}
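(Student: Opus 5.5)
The plan is to prove the two directions separately, using Lemma~\ref{lem:closure} as the invariant for the hard direction.

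\emph{The direction \(q\le r\).} When \(q\le r\) we have \(\ell_{r,q}(z)\equiv z\pmod{p^{q}}\), so \(H_{p;r,q}(a,b)=\omega^{ab}\). This is literally the Fourier matrix \(F_{p^{q}}\), which is the character table of \(\Z/p^{q}\Z\), with rows indexed by the characters \(b\mapsto\omega^{ab}\). No monomial adjustment is needed. I will check the linearity claim directly from the multiplicative description. Since \(\varepsilon^{2}\equiv0\pmod{p^{q+r}}\) whenever \(2r\ge q+r\), i.e.\ \(q\le r\), we get \((1+\varepsilon x)(1+\varepsilon y)\equiv1+\varepsilon(x+y)\). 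Hence \(z\mapsto\chi(1+\varepsilon z)\) is an additive character of \(\Rq\), and it is faithful because \(\chi\) is.

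\emph{The direction \(q>r\): the invariant.} Assume \(q>r\). The plan has three steps.
\begin{enumerate}
\item[(i)] Any character table \(X\) of a finite abelian group \(G\) has projective row-ratio closure at every base row. With rows indexed by \(\widehat G\), the rows are multiplicatively closed and contain the trivial character. So \((\psi_1/\psi_0)(\psi_2/\psi_0)=\psi_3/\psi_0\) with \(\psi_3=\psi_1\psi_2\psi_0^{-1}\), and \(\gamma=1\).
\item[(ii)] By the invariance part of Lemma~\ref{lem:closure}, projective row-ratio closure transfers across monomial equivalence, keeping track of the base row under the row permutation. Thus if \(H_{p;r,q}\) were monomially equivalent to \(X\), then \(H_{p;r,q}\) would have closure at every base row, and in particular at \(a_0=0\).
\item[(iii)] By Lemma~\ref{lem:closure} at \(a_0=0\), closure would force \(p^{r}a_1a_2\equiv0\pmod{p^{q}}\) for all \(a_1,a_2\). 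Take the unit witness \(a_1=a_2=1\): this requires \(p^{r}\equiv0\pmod{p^{q}}\), which is false since \(q>r\). This contradiction proves the claim.
\end{enumerate}
The matrix sizes agree automatically, since \(|G|=p^{q}\).

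\emph{Main obstacle.} The delicate point is step (ii). Lemma~\ref{lem:closure} states invariance, but I need to confirm that the quantifier over \(a_1,a_2\) maps correctly under the row permutation. The row scaling \(D_1\) multiplies each row ratio by a constant, which is absorbed into \(\gamma\). The column scaling \(D_2\) cancels in every ratio. The column permutation permutes the coordinate uniformly on both sides of the identity.

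I will also double-check the ``expanding and cancelling'' computation behind Lemma~\ref{lem:closure} at \(a_0=0\), since the argument relies on it. The identity \(\chi((1+\varepsilon a_1b)(1+\varepsilon a_2b))=\gamma\,\chi(1+\varepsilon a_3b)\) at \(b=0\) gives \(\gamma=1\). Faithfulness then gives
\[
1+\varepsilon(a_1+a_2)b+\varepsilon^{2}a_1a_2b^{2}\equiv1+\varepsilon a_3b\pmod{p^{q+r}}.
\]
Taking \(b=1\) gives \(a_3\equiv a_1+a_2+\varepsilon a_1a_2\pmod{p^{q}}\). Substituting this back and taking \(b=2\) leaves \(2\varepsilon^{2}a_1a_2\equiv0\pmod{p^{q+r}}\), which is \(p^{r}a_1a_2\equiv0\pmod{p^{q}}\) because \(p\) is odd. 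This confirms the witness in step (iii).
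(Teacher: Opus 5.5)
Your proof is correct and follows the paper's argument essentially verbatim. For \(q\le r\) the kernel is \(\omega^{ab}\); your check that \(z\mapsto\chi(1+\varepsilon z)\) is a faithful additive character is a slightly more careful version of the paper's \(\ell_{r,q}(ab)\equiv ab\), since it also covers an arbitrary conductor unit. For \(q>r\) you use the same three ingredients as the paper, namely closure of character tables at every base row, the monomial invariance in Lemma~\ref{lem:closure}, and the unit witness \(a_1=a_2=a_0+1\), specialized harmlessly to \(a_0=0\); your recomputation of the lemma there (forcing \(\gamma=1\), then taking \(b=1\) and \(b=2\)) is also correct.
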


\begin{proof}
If \(q\le r\) then \(p^{r}\equiv0\) in \(\Rq\), so \(\ell_{r,q}(ab)\equiv ab\) and
\(H(a,b)=\omega^{ab}\) is the character table of \(\Z/p^{q}\Z\).

Conversely, the character table of a finite abelian group has projective
row-ratio closure at every base row. If \(q>r\), choose \(a_1=a_0+1\) and
\(a_2=a_0+1\); then \((a_1-a_0)(a_2-a_0)=1\) is a unit, so
\(p^{r}\cdot1\not\equiv0\) modulo \(p^{q}\) and Lemma~\ref{lem:closure} shows
closure fails at \(a_0\). As \(a_0\) was arbitrary, closure fails at every base
row, and by Lemma~\ref{lem:closure} this obstruction is a monomial-equivalence
invariant.
\end{proof}

\subsection{The cocycle threshold}

\begin{theorem}[Canonical cocycle threshold]\label{thm:cocycle}
Let \(\psi(a,b)=\chi(1+\varepsilon ab)\), a normalized \(2\)-cochain on the
additive group \(\Rq\), and let
\[
\mathfrak A(a,b,c)=\frac{\psi(a,b)\,\psi(a+b,c)}{\psi(a,b+c)\,\psi(b,c)}
\]
be its associator. Then
\[
\mathfrak A(a,b,c)=1
\quad\Longleftrightarrow\quad
p^{r}abc(a-c)\equiv0\pmod{p^{q}} ,
\]
and consequently \(\psi\) is a \(2\)-cocycle on \(\Rq\) if and only if
\(q\le r\). Moreover no normalized \(1\)-cochain repairs the failure, and no
automorphism of the cyclic group \(\Rq\) removes it.
\end{theorem}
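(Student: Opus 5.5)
The plan is to turn the associator condition into a congruence between products of principal units, using the same faithfulness argument as in the proof of Theorem~\ref{thm:auto}, and then expand. Multiplicativity of \(\chi\) gives
\[
\mathfrak A(a,b,c)=\chi\!\left(\frac{(1+\varepsilon ab)\bigl(1+\varepsilon(a+b)c\bigr)}{\bigl(1+\varepsilon a(b+c)\bigr)(1+\varepsilon bc)}\right).
\]
Numerator and denominator both lie in \(1+p^{r}\Zp\), and \(\chi\) is faithful on \((1+p^{r}\Zp)/(1+p^{q+r}\Zp)\). Hence \(\mathfrak A(a,b,c)=1\) exactly when
\[
(1+\varepsilon ab)\bigl(1+\varepsilon(a+b)c\bigr)\equiv\bigl(1+\varepsilon a(b+c)\bigr)(1+\varepsilon bc)\pmod{p^{q+r}}.
\]

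\emph{Expansion.} The linear terms on the two sides agree: each side contributes \(\varepsilon(ab+ac+bc)\). The quadratic terms are \(\varepsilon^{2}abc(a+b)\) on the left and \(\varepsilon^{2}abc(b+c)\) on the right. Their difference is \(\varepsilon^{2}abc(a-c)\), so the condition becomes \(p^{2r}abc(a-c)\equiv0\pmod{p^{q+r}}\). Dividing by \(p^{r}\), this is \(p^{r}abc(a-c)\equiv0\pmod{p^{q}}\), which is the displayed equivalence.

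\emph{The threshold.} If \(q\le r\), then \(p^{r}\equiv0\) in \(\Rq\), so \(\mathfrak A\equiv1\) and \(\psi\) is a \(2\)-cocycle. If \(q>r\), take the unit witness \(a=b=1\), \(c=-1\). Then \(abc(a-c)=-2\), which is a unit because \(p\) is odd, and \(p^{r}\cdot(-2)\not\equiv0\pmod{p^{q}}\). Hence \(\mathfrak A(1,1,-1)\neq1\). This is the first failure at \(q=r+1\) promised in Section~\ref{ssec:one}.

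\emph{No repair.} For the coboundary statement I will observe that \(\mathfrak A=\delta\psi\) with trivial \(U(1)\) coefficients. Replacing \(\psi\) by \(\psi\cdot\delta f\), where \(\delta f(a,b)=f(a)f(b)/f(a+b)\), leaves the associator unchanged, because \(\delta\delta f=1\). This is a direct four-term cancellation, which I would write out. So no normalized \(1\)-cochain can make the associator trivial.

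\emph{No automorphism.} The automorphisms of the cyclic group \(\Rq\) are the maps \(a\mapsto ua\) with \(u\in\Rq^{\times}\). The transported cochain \(\psi_u(a,b)=\psi(ua,ub)\) has associator \(\mathfrak A_u(a,b,c)=\mathfrak A(ua,ub,uc)\). Evaluating at the witness gives the condition \(p^{r}u^{4}\cdot(-2)\not\equiv0\), so the obstruction persists. Combining a coboundary with an automorphism changes nothing, by the two previous observations.

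I expect the main obstacle to be only the bookkeeping. One must make sure that faithfulness legitimately converts equality of character values into a congruence modulo \(p^{q+r}\) rather than modulo \(p^{q}\); this is exactly why the factor \(\varepsilon^{2}\) reduces to \(p^{r}\) modulo \(p^{q}\). Everything else is routine expansion.
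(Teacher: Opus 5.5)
Your proposal is correct and follows the paper's proof essentially step for step: faithfulness reduces \(\mathfrak A=1\) to a congruence modulo \(p^{q+r}\), the expansion gives the difference \(\varepsilon^{2}abc(a-c)\), and the no-repair and no-automorphism claims follow from \(\delta\delta f=1\) and the \(u^{4}\) scaling, as in the paper. The only difference is your witness \((1,1,-1)\) in place of the paper's \((1,1,2)\), and it gives the same unit value \(-2\).
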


\begin{proof}
Put \(L=(1+\varepsilon ab)(1+\varepsilon(a+b)c)\) and
\(R=(1+\varepsilon a(b+c))(1+\varepsilon bc)\), so that
\(\mathfrak A=\chi(L/R)\) by multiplicativity. Direct expansion gives the exact
identity
\[
L-R=\varepsilon^{2}abc(a-c) .
\tag{4.1}\label{eq:assoc}
\]
By faithfulness, \(\mathfrak A=1\) if and only if \(L\equiv R\) modulo
\(p^{q+r}\), which after dividing \eqref{eq:assoc} by \(\varepsilon=p^{r}\) is
the displayed criterion.

If \(q\le r\) then \(p^{r}\equiv0\) in \(\Rq\) and the criterion holds
identically. If \(q>r\), take \((a,b,c)=(1,1,2)\); since \(p\) is odd,
\(abc(a-c)=-2\) is a unit, so \(p^{r}abc(a-c)\not\equiv0\) and \(\psi\) is not a
cocycle. The first failure is thus at \(q=r+1\).

For the last two assertions: for any normalized \(1\)-cochain \(f\) the
coboundary \(\delta f\) satisfies \(\delta(\delta f)=1\) identically, whence
\(\delta(\psi\cdot\delta f)=\delta\psi=\mathfrak A\); and under \(a\mapsto ua\)
with \(u\in\Rq^{\times}\) the obstruction transforms by
\(abc(a-c)\mapsto u^{4}abc(a-c)\), a unit multiple.
\end{proof}

\begin{remark}[Scope]\label{rem:cocyclescope}
Theorem~\ref{thm:cocycle} concerns the canonical cyclic indexing of \(H\) by
\(\Rq\). It does not exclude cocyclic realizations over other groups or other
indexings; the cocyclic framework
\cite{Flannery,PereraHoradam,EganFlanneryOCathain} supplies tests that we have
not carried out, and we regard the general cocyclicity of the family as open.
\end{remark}

\subsection{The Weyl-centrality threshold}

For \(c,s\in\Rq\) define operators on functions \(f:\Rq\to\mathbf C\) by
\[
(C_cf)(b)=\chi(1+\varepsilon cb)\,f\!\left(\frac{b}{1+\varepsilon cb}\right),
\qquad
(S_sf)(b)=f(b+s),
\]
so that \(C_cH_a=H_{a+c}\); these are the images in \(\Mgp_{p;r,q}\) of the
triples \((c,0,1)\) and \((0,s,1)\) of Theorem~\ref{thm:auto}.

\begin{theorem}[Canonical Weyl centrality]\label{thm:weyl}
The group commutator \(C_cS_sC_c^{-1}S_s^{-1}\) acts projectively by the
M\"obius matrix
\[
\begin{pmatrix}
1-\varepsilon cs & \varepsilon cs^{2}\\
-\varepsilon^{2}c^{2}s & 1+\varepsilon cs+\varepsilon^{2}c^{2}s^{2}
\end{pmatrix},
\]
and is projectively scalar if and only if \(\varepsilon cs\equiv0\pmod{p^{q}}\),
that is, if and only if \(v_p(cs)\ge q-r\). Consequently the commutator is
scalar for all \(c,s\) if and only if \(q\le r\), and for \(q>r\) it is scalar
exactly on the rectangles \(p^{\alpha}\Rq\times p^{\beta}\Rq\) with
\(r+\alpha+\beta\ge q\).
\end{theorem}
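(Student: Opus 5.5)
The plan is to reduce everything to the fractional-linear normal form of Theorem~\ref{thm:auto}, and then do one explicit $2\times2$ matrix computation. By that theorem, an element of $\Mgp_{p;r,q}$ is determined by its triple $(c,s,\lambda)$. It acts on the column coordinate by the Möbius matrix $N=\left(\begin{smallmatrix}\lambda&s\\ \varepsilon c&1\end{smallmatrix}\right)$, and composition corresponds to matrix multiplication followed by renormalizing the lower-right entry (the unit $u$ in \eqref{eq:complaw}). With the conventions of the operators $C_c,S_s$ defined above, I will use
\[
A=\begin{pmatrix}1&0\\ \varepsilon c&1\end{pmatrix},\qquad B=\begin{pmatrix}1&s\\ 0&1\end{pmatrix},
\]
so that $A$ realizes $b\mapsto b/(1+\varepsilon cb)$ and $B$ realizes $b\mapsto b+s$. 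Their inverses are obtained by negating $c$ and $s$, since these are the triples $(-c,0,1)$ and $(0,-s,1)$.

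\emph{Step 1: the commutator matrix.} I compute $AB=\left(\begin{smallmatrix}1&s\\ \varepsilon c&1+\varepsilon cs\end{smallmatrix}\right)$ and $A^{-1}B^{-1}=\left(\begin{smallmatrix}1&-s\\ -\varepsilon c&1+\varepsilon cs\end{smallmatrix}\right)$. Multiplying these gives exactly the displayed matrix. Its lower-right entry is $\equiv1\pmod p$, so it is already a legitimate normal-form representative after dividing by that unit.

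\emph{Step 2: the scalar criterion.} By the uniqueness clause of Theorem~\ref{thm:auto}, the commutator is projectively trivial if and only if the matrix is a scalar multiple of $I$ modulo $p^q$. Equivalently, after normalizing its lower-right entry to $1$, its triple must be $(0,0,1)$. This requires three conditions:
\begin{itemize}
\item the off-diagonal entries $\varepsilon cs^2$ and $-\varepsilon^2c^2s$ vanish;
\item the diagonal entries agree, i.e.\ $(1+\varepsilon cs+\varepsilon^2c^2s^2)-(1-\varepsilon cs)=\varepsilon cs\,(2+\varepsilon cs)\equiv0$.
\end{itemize}
Since $p$ is odd and $r\ge1$, the factor $2+\varepsilon cs$ is a unit. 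So the diagonal condition alone forces $\varepsilon cs\equiv0\pmod{p^q}$. Conversely, $\varepsilon cs\equiv0$ kills both off-diagonal entries, because each is a multiple of $\varepsilon cs$. It also reduces the diagonal entries to $1$, so the matrix is the identity. The condition $\varepsilon cs\equiv0$ is the same as $r+v_p(cs)\ge q$, i.e.\ $v_p(cs)\ge q-r$, with the convention $v_p(0)=q$.

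\emph{Step 3: global and rectangle consequences.}
\begin{itemize}
\item If $q\le r$, then $\varepsilon\equiv0$ and every commutator is scalar.
\item If $q>r$, the choice $c=s=1$ gives $\varepsilon cs=p^r\not\equiv0$, so global scalarity fails.
\item For the rectangles: if $c\in p^\alpha\Rq$ and $s\in p^\beta\Rq$, then $v_p(cs)\ge\alpha+\beta$. Hence $r+\alpha+\beta\ge q$ suffices.
\item Conversely, if $r+\alpha+\beta<q$, the witness $c=p^\alpha$, $s=p^\beta$ gives $v_p(cs)=\alpha+\beta<q-r$, and scalarity fails there.
\end{itemize}

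The only step I expect to need care is Step 2. There one must justify that "projectively scalar" for the operator is equivalent to scalarity of the Möbius matrix, i.e.\ that the phase parts $\omega,\alpha$ carry no extra information. This is exactly the content of the uniqueness statement in Theorem~\ref{thm:auto}: the triple determines the whole quadruple $(\pi,\sigma,\alpha,\omega)$ up to a global scalar. The rest is routine.
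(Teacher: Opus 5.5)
Your proof is correct and is essentially the paper's argument: the same product \(L_cU_sL_c^{-1}U_s^{-1}\), the same observation that the diagonal difference \(\varepsilon cs(2+\varepsilon cs)\) has a unit second factor, and the same valuation translation, with your explicit witnesses \(c=p^{\alpha}\), \(s=p^{\beta}\) spelling out what the paper calls a ``translation.'' One correction to your closing justification: uniqueness of the triple does not in general let you read projective triviality off the \(2\times2\) matrix over \(\Rq\), since that matrix only sees \(\varepsilon c\), i.e.\ \(c \bmod p^{q-r}\) (any \(C_c\) with \(\varepsilon c\equiv0\) but \(c\neq0\) has identity column map yet shifts rows by \(c\)); so for the operator-level claim you must also check that the commutator's \(c\)-coordinate vanishes. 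That coordinate is \(-c\cdot\varepsilon cs\) divided by the normalizing unit, so it does vanish once \(\varepsilon cs\equiv0\).
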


\begin{proof}
By Theorem~\ref{thm:auto} the column actions of \(C_c\) and \(S_s\) are the
M\"obius maps with matrices
\(L_c=\left(\begin{smallmatrix}1&0\\ \varepsilon c&1\end{smallmatrix}\right)\)
and
\(U_s=\left(\begin{smallmatrix}1&s\\ 0&1\end{smallmatrix}\right)\), and the
displayed matrix is \(L_cU_sL_c^{-1}U_s^{-1}\), computed directly. It is scalar
in \(\PGL_2(\Rq)\) if and only if both off-diagonal entries vanish and the two
diagonal entries agree; the diagonal difference is
\(\varepsilon cs(\varepsilon cs+2)\), and since \(p\) is odd and
\(\varepsilon cs\) is not a unit, \(\varepsilon cs+2\) is a unit. Hence
scalarity is equivalent to \(\varepsilon cs\equiv0\), and the off-diagonal
entries \(\varepsilon cs^{2}\) and \(-\varepsilon^{2}c^{2}s\) then vanish as
well. The remaining statements are the translation of
\(v_p(\varepsilon cs)\ge q\) into \(v_p(c)+v_p(s)\ge q-r\).
\end{proof}

\begin{remark}[Scope]\label{rem:weylscope}
Theorem~\ref{thm:weyl} concerns the canonical lower and upper torsors only. It
does not exclude noncanonical Heisenberg models built from other subgroups of
\(\Mgp_{p;r,q}\).
\end{remark}

\section{Polarized Heisenberg windows and their normalizers}\label{sec:polarized}

Theorem~\ref{thm:weyl} says that global scalar commutators disappear when
\(q>r\), but it also gives the exact valuation region on which scalarity
survives.  We now classify the maximal rectangles in that region and the
normalizers that preserve them.  These statements concern only the canonical
lower and upper torsors above.

For \(0\le\alpha,\beta\le q\), put
\[
A_\alpha=p^\alpha\Rq,\qquad B_\beta=p^\beta\Rq,
\]
with \(p^q\Rq=\{0\}\), and write
\[
\mathbf B(c,s)=\chi(1+p^rcs).
\]

\begin{theorem}[Maximal polarized Heisenberg windows]\label{thm:polarized}
The canonical commutators are scalar for every
\((c,s)\in A_\alpha\times B_\beta\) if and only if
\[
\boxed{\;r+\alpha+\beta\ge q.\;}
\]
On every such scalar window, \(\mathbf B\) is a bicharacter and
\[
\operatorname{Rad}_{A}(\mathbf B)
=p^{\max(\alpha,q-\beta)}\Rq,
\qquad
\operatorname{Rad}_{B}(\mathbf B)
=p^{\max(\beta,q-\alpha)}\Rq.
\]
If \(q\le r\), the full window \((\alpha,\beta)=(0,0)\) is scalar.  If
\(q>r\), the inclusion-maximal scalar windows are exactly
\[
\boxed{\;\alpha+\beta=q-r,\qquad \alpha,\beta\ge0,\;}
\]
so there are precisely \(q-r+1\) maximal polarizations.  On such a maximal
window the radicals are
\[
\operatorname{Rad}_{A}=p^{\alpha+r}\Rq,
\qquad
\operatorname{Rad}_{B}=p^{\beta+r}\Rq,
\]
and the residual groups
\[
\overline A_\alpha=A_\alpha/p^{\alpha+r}\Rq,
\qquad
\overline B_\beta=B_\beta/p^{\beta+r}\Rq
\]
are cyclic of order \(p^r\).  If \(\kappa\in\Rq^\times\) is the conductor unit
of the faithful character, then for
\(c=p^\alpha x\), \(s=p^\beta y\) one has
\[
\boxed{\;
\mathbf B(c,s)=\exp\!\left(\frac{2\pi i\,\kappa xy}{p^r}\right).
\;}
\]
Hence \(\mathbf B\) descends to a perfect pairing
\[
\overline A_\alpha\times\overline B_\beta\longrightarrow\mu_{p^r},
\]
which, up to the conductor unit, is the standard pairing on
\((\Z/p^r\Z)^2\).  Its commutator data therefore define the standard finite
Heisenberg central extension
\[
1\longrightarrow\mu_{p^r}\longrightarrow\mathsf H_{p^r}
\longrightarrow\overline A_\alpha\oplus\overline B_\beta
\longrightarrow1
\]
of order \(p^{3r}\).
\end{theorem}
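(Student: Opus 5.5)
The scalarity criterion follows directly from Theorem~\ref{thm:weyl}. Scalarity at \((c,s)\) is equivalent to \(v_p(c)+v_p(s)\ge q-r\). On \(A_\alpha\times B_\beta\) the minimum of \(v_p(c)+v_p(s)\) is attained at \(c=p^\alpha\), \(s=p^\beta\), with value \(\alpha+\beta\) when both are \(<q\). In the degenerate case where \(\alpha=q\) or \(\beta=q\), one factor is \(\{0\}\) and the condition holds trivially; this is consistent with \(r+\alpha+\beta\ge q\). So the window is scalar iff \(r+\alpha+\beta\ge q\).

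For \(q\le r\) the pair \((0,0)\) already qualifies. For \(q>r\), the family of windows is ordered by reverse inclusion in \((\alpha,\beta)\), and the condition \(\alpha+\beta\ge q-r\) is monotone. A window is therefore maximal iff decreasing either index breaks the condition, i.e.\ iff \(\alpha+\beta=q-r\). This gives the \(q-r+1\) pairs \(\alpha=0,\dots,q-r\).

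Next I would show that \(\mathbf B\) is a bicharacter on any scalar window. The key identity is
\[
(1+p^rcs)(1+p^rc's)=1+p^r(c+c')s+p^{2r}cc's^2 .
\]
The error term has valuation at least \(2r+v_p(c)+v_p(s)\ge r+q\), because \(v_p(c)+v_p(s)\ge q-r\) on the window. Faithfulness of \(\chi\) modulo \(p^{q+r}\) then gives additivity in \(c\), and additivity in \(s\) follows symmetrically.

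For the explicit formula I would use \(\ell_{r,q}\) from Definition~\ref{def:family}. On the window \(z=cs\) satisfies \(v_p(z)\ge q-r\). The higher log terms \(p^{(k-1)r}z^k/k\) for \(k\ge2\) then have valuation at least \((k-1)r+k(q-r)-v_p(k)\). This is at least \(q\) as soon as \((k-1)q\ge v_p(k)\), which holds since \(q\ge1\) and \(p^{v_p(k)}\le k\). Hence \(\chi(1+p^rz)=\exp(2\pi i\kappa z/p^q)\), where \(\kappa\) records which faithful \(\chi\) was chosen relative to \(\exp(2\pi i\,\ell_{r,q}(\cdot)/p^q)\). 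Substituting \(c=p^\alpha x\), \(s=p^\beta y\) and \(\alpha+\beta=q-r\) gives the exponent \(2\pi i\kappa xy/p^r\).

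For a general scalar window the linearity \(\mathbf B(c,s)=\exp(2\pi i\kappa cs/p^q)\) holds by the same argument. The radical in \(A_\alpha\) therefore consists of those \(c\in A_\alpha\) with \(cs\equiv0 \pmod{p^q}\) for all \(s\in B_\beta\). This is equivalent to \(v_p(c)\ge q-\beta\), which gives \(p^{\max(\alpha,q-\beta)}\Rq\); the \(B\)-radical is symmetric.

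On a maximal window \(q-\beta=\alpha+r>\alpha\), so \(\operatorname{Rad}_A=p^{\alpha+r}\Rq\), and likewise \(\operatorname{Rad}_B=p^{\beta+r}\Rq\). The quotient \(p^\alpha\Rq/p^{\alpha+r}\Rq\) is cyclic of order \(p^r\), generated by \(p^\alpha\), and similarly for \(\overline B_\beta\). The induced pairing is \((x,y)\mapsto\zeta_{p^r}^{\kappa xy}\) with \(\kappa\) a unit, so it is perfect and standard up to the automorphism \(x\mapsto\kappa x\). The Heisenberg extension is then the standard one built from this nondegenerate alternating commutator form on \(\overline A\oplus\overline B\), with the form vanishing on each factor. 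It is realized by the cocycle \(((x,y),(x',y'))\mapsto\mathbf B(x,y')\) and has order \(p^r\cdot p^{2r}\).

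The main obstacle is the valuation estimate on the log tail, in particular the \(p\)-adic denominators \(1/k\) when \(p\mid k\). It is also delicate to pin down the normalization of \(\kappa\), since "conductor unit" must be read as the unit with \(\chi(1+p^rz)=\omega^{\kappa\ell_{r,q}(z)}\). I would handle the tail via the bound \(v_p(k)\le (k-1)\) together with \(q\ge1\), checking the borderline \(k=p\) explicitly.
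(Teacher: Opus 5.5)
Your argument is correct and follows the paper's proof essentially step for step: the Weyl criterion plus the minimal product valuation \(\alpha+\beta\) for scalarity, monotonicity for maximality, the cross term \(p^{2r}cc's^{2}\) for the bicharacter property, and the vanishing logarithm tail on the boundary for the explicit pairing. The one small difference is that the paper gets the radicals directly from faithfulness, \(\mathbf B(c,s)=1\iff cs\equiv0\pmod{p^{q}}\), so the logarithm estimate is not needed there.

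One arithmetic fix is needed in the step you flagged. The inequality \((k-1)r+k(q-r)-v_p(k)\ge q\) is equivalent to \((k-1)q\ge r+v_p(k)\), not to \((k-1)q\ge v_p(k)\). It still holds, because \(q\ge r+1\) and \(v_p(k)\le k-1\) give \((k-1)q\ge (k-1)r+(k-1)\ge r+v_p(k)\). For general scalar windows with \(q\le r\), use \(\ell_{r,q}(z)\equiv z\) directly instead.
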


\begin{proof}
The scalar-window criterion is Theorem~\ref{thm:weyl}: the least possible
valuation of a nonzero product in \(A_\alpha B_\beta\) is
\(\alpha+\beta\), so scalarity for the entire rectangle is equivalent to
\(r+\alpha+\beta\ge q\).  Inclusion of the parameter subgroups reverses both
exponents, hence for \(q>r\) a scalar rectangle is inclusion-maximal exactly on
the boundary \(\alpha+\beta=q-r\).

On a scalar window, multiplicativity in the first variable fails before
application of \(\chi\) only by the cross term
\(p^{2r}c_1c_2s^2\), whose valuation is at least
\(2r+2\alpha+2\beta\ge q+r\); the same argument applies in the second
variable.  Thus \(\mathbf B\) is a bicharacter.  Faithfulness gives
\(\mathbf B(c,s)=1\) exactly when \(cs\equiv0\pmod{p^q}\), which yields the
displayed radicals.  On a maximal window they reduce to
\(p^{\alpha+r}\Rq\) and \(p^{\beta+r}\Rq\), so both residual groups have
order \(p^r\).

Finally, on the boundary \(\alpha+\beta=q-r\) we have
\(cs=p^{q-r}xy\).  Under the normalized logarithm the higher terms are
invisible modulo \(p^q\), and
\[
p^{-r}\log(1+p^rcs)\equiv p^{q-r}xy\pmod{p^q}.
\]
Writing the faithful character through its conductor unit \(\kappa\) gives the
stated \(p^r\)-th root of unity.  The induced pairing has zero left and right
annihilator, so it is perfect and gives the standard finite Heisenberg central
extension.
\end{proof}

\begin{remark}[Scope of the residual Heisenberg statement]\label{rem:polarizedscope}
The theorem classifies valuation rectangles for the \emph{canonical} lower and
upper torsors.  A scalar window is not a new full automorphism group, and the
Heisenberg group above is the abstract central extension determined by the
perfect residual commutator pairing.  Central radical operators need not be
scalar before quotienting.  No arbitrary cocyclic model, noncanonical
Heisenberg subgroup, Weil representation, or metaplectic representation is
asserted.
\end{remark}

For a maximal polarization, write its projective window inside the normal form
of Theorem~\ref{thm:auto} as
\[
\mathsf W_{\alpha,\beta}
=\left\{\left[
\begin{pmatrix}1&s\\ p^rc&1\end{pmatrix}
\right]:c\in p^\alpha\Rq,\ s\in p^\beta\Rq\right\}.
\]
Because \(r+\alpha+\beta=q\), the lower and upper factors commute projectively,
and \(|\mathsf W_{\alpha,\beta}|=p^{q+r}\).

\begin{theorem}[Exact normalizers and polarization rigidity]\label{thm:normalizer}
Assume \(q>r\) and \(\alpha+\beta=q-r\).  For
\[
M(v,u,\lambda)=
\begin{pmatrix}\lambda&u\\p^rv&1\end{pmatrix},
\qquad \lambda\in\Rq^\times,
\]
the normalizer of \(\mathsf W_{\alpha,\beta}\) inside
\(\Mgp_{p;r,q}\) is exactly
\[
\boxed{\;
\mathsf N_{\alpha,\beta}
=\{M(v,u,\lambda):v\in p^\alpha\Rq,\ u\in p^\beta\Rq,
\ \lambda\in\Rq^\times\}.
\;}
\]
It has order
\[
|\mathsf N_{\alpha,\beta}|=p^{q+r}\varphi(p^q).
\]
If \(X(c,s)=\left(\begin{smallmatrix}1&s\\p^rc&1\end{smallmatrix}\right)\)
and \(\Delta=\lambda-p^ruv\), then exactly
\[
MX(c,s)M^{-1}=I+\Delta^{-1}
\begin{pmatrix}A&B\\C&-A\end{pmatrix},
\]
where
\[
A=p^r(uc-\lambda vs),\qquad
B=\lambda^2s-p^ru^2c,\qquad
C=p^r(c-p^rv^2s).
\]
For \(M\in\mathsf N_{\alpha,\beta}\) this reduces to
\[
\boxed{\;MX(c,s)M^{-1}=X(\lambda^{-1}c,\lambda s).\;}
\]
Consequently distinct maximal polarizations are pairwise nonconjugate inside
\(\Mgp_{p;r,q}\), and
\[
\boxed{\;\mathsf N_{\alpha,\beta}/\mathsf W_{\alpha,\beta}
\cong\Rq^\times.\;}
\]
On the perfect residual pairing this quotient acts by
\[
(x,y)\longmapsto(\lambda^{-1}x,\lambda y),
\]
so the effective action factors through \((\Z/p^r\Z)^\times\), with kernel
\(1+p^r\Rq\).
\end{theorem}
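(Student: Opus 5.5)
The plan is to work entirely inside the normal form of Theorem~\ref{thm:auto}, using the fact that \(M(v,u,\lambda)\) is the M\"obius matrix attached to the triple \((v,u,\lambda)\). I first check that \(\mathsf W_{\alpha,\beta}\) is a subgroup. Multiplying two elements \(X(c,s)X(c',s')\) gives diagonal entries \(1+p^rsc'\) and \(1+p^rcs'\). These are both \(\equiv1\), because \(v_p(p^rcs')\ge r+\alpha+\beta=q\), so the product is \(X(c+c',s+s')\). This also gives \(|\mathsf W_{\alpha,\beta}|=p^{q-\alpha}p^{q-\beta}=p^{q+r}\).

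Next I verify the general conjugation formula by direct multiplication, using
\[
M^{-1}=\Delta^{-1}\begin{pmatrix}1&-u\\-p^rv&\lambda\end{pmatrix}.
\]
Expanding \(MXM^{-1}-I\) should produce the stated \(A,B,C\); this is routine.

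\emph{Sufficiency.} Take \(v\in p^\alpha\Rq\) and \(u\in p^\beta\Rq\). Then \(p^ruv\), \(p^ruc\), \(p^rvs\), \(p^ru^2c\) and \(p^{2r}v^2s\) all have valuation \(\ge r+\alpha+\beta=q\). Hence \(\Delta=\lambda\), \(A=0\), \(B=\lambda^2s\) and \(C=p^rc\), which gives \(MXM^{-1}=X(\lambda^{-1}c,\lambda s)\in\mathsf W_{\alpha,\beta}\). So \(\mathsf N_{\alpha,\beta}\) normalizes.

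\emph{Necessity.} Elements of \(\mathsf W\) have normal form with top-left entry \(1\), so they are projectively \(X\) exactly. Projective membership of \(I+\Delta^{-1}\left(\begin{smallmatrix}A&B\\C&-A\end{smallmatrix}\right)\) in \(\mathsf W\) therefore forces the two diagonal entries \(\Delta\pm A\) to agree. Since \(p\) is odd, this means \(A\equiv0\). Applying this to \(X(0,p^\beta)\) gives \(p^{r+\beta}\lambda v\equiv0\), hence \(v_p(v)\ge q-r-\beta=\alpha\). Applying it to \(X(p^\alpha,0)\) gives \(p^{r+\alpha}u\equiv0\), hence \(v_p(u)\ge\beta\). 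Counting \(v\), \(u\) and \(\lambda\) then gives \(|\mathsf N_{\alpha,\beta}|=p^{q-\alpha}p^{q-\beta}\varphi(p^q)=p^{q+r}\varphi(p^q)\).

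\emph{Quotient and residual action.} By the composition law \eqref{eq:complaw}, restricted to \(\mathsf N_{\alpha,\beta}\), the term \(\varepsilon vu'\) vanishes and \(u=1+\varepsilon uv'=1\). Hence \(M\mapsto\lambda\) is a homomorphism \(\mathsf N_{\alpha,\beta}\to\Rq^\times\). It is visibly surjective (take \(u=v=0\)), and its kernel is \(\{\lambda=1\}=\mathsf W_{\alpha,\beta}\). The conjugation formula \(X(c,s)\mapsto X(\lambda^{-1}c,\lambda s)\) then reads \((x,y)\mapsto(\lambda^{-1}x,\lambda y)\) in the coordinates \(c=p^\alpha x\), \(s=p^\beta y\). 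On the residual groups of Theorem~\ref{thm:polarized} these are read modulo \(p^r\), so the action factors through \(\lambda\bmod p^r\). The kernel is exactly \(1+p^r\Rq\), since \(\lambda x\equiv x\pmod{p^r}\) for \(x=1\) forces \(\lambda\equiv1\).

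\emph{Nonconjugacy.} This is the step I expect to need the most care, because abstract group type does not suffice: \(\mathsf W_{\alpha,\beta}\cong\Z/p^{q-\alpha}\times\Z/p^{q-\beta}\), and the swapped polarization has the same type. Instead I reuse the necessity computation with target \(\mathsf W_{\alpha',\beta'}\). Suppose \(M\mathsf W_{\alpha,\beta}M^{-1}=\mathsf W_{\alpha',\beta'}\) for an arbitrary \(M\in\Mgp_{p;r,q}\). The diagonal condition \(A\equiv0\) does not depend on the target, so it again forces \(v\in p^\alpha\Rq\) and \(u\in p^\beta\Rq\), and the reduced formula applies. The image of \(X(0,p^\beta)\) is then \(X(0,\lambda p^\beta)\), whose upper entry has valuation exactly \(\beta\), so \(\beta\ge\beta'\). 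Likewise the image of \(X(p^\alpha,0)\) is \(X(\lambda^{-1}p^\alpha,0)\), giving \(\alpha\ge\alpha'\). Since \(\alpha+\beta=\alpha'+\beta'=q-r\), both inequalities are equalities. Therefore distinct maximal polarizations are nonconjugate.
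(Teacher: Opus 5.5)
Your proof is correct and follows the paper's route. You use the explicit conjugation formula, then the equal-diagonal condition forcing \(A=0\), with the test elements \(X(0,p^\beta)\) and \(X(p^\alpha,0)\) giving \(v\in p^\alpha\Rq\) and \(u\in p^\beta\Rq\); then the reduced formula for sufficiency, the \(\lambda\)-projection with kernel \(\mathsf W_{\alpha,\beta}\) for the quotient, and valuation preservation for nonconjugacy, where you state explicitly what the paper's one-line remark leaves implicit: the diagonal condition already forces an arbitrary conjugating element into \(\mathsf N_{\alpha,\beta}\) before the valuation comparison is made.
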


\begin{proof}
The displayed conjugation identity is direct matrix multiplication.  If a
conjugate is to lie in a window of the same form, its two diagonal entries must
agree projectively.  Since \(p\) is odd, this forces \(A=0\) for every
\(c\in p^\alpha\Rq\), \(s\in p^\beta\Rq\).  Taking first \(s=0\) and then
\(c=0\) yields
\[
p^ru p^\alpha=0,\qquad p^rv p^\beta=0\pmod{p^q},
\]
which, using \(\alpha+\beta=q-r\), is exactly
\(u\in p^\beta\Rq\), \(v\in p^\alpha\Rq\).  Conversely, under these
conditions \(\Delta\equiv\lambda\pmod{p^q}\), the nonlinear terms in \(B,C\)
vanish, and conjugation is precisely
\((c,s)\mapsto(\lambda^{-1}c,\lambda s)\).  This proves the normalizer formula
and the order count.

The same formula preserves the valuation ideals of \(c\) and \(s\), so a
maximal window cannot be conjugated to one with different \((\alpha,\beta)\).
Finally the map \(\mathsf N_{\alpha,\beta}\to\Rq^\times\) recording \(\lambda\)
is surjective with kernel \(\mathsf W_{\alpha,\beta}\), and its residual action
is the displayed inverse/direct scaling.  It depends only on \(\lambda\bmod
p^r\), giving the effective unit group \((\Z/p^r\Z)^\times\).
\end{proof}

\begin{corollary}[No internal building adjacency]\label{cor:nobuilding}
The \(q-r+1\) maximal polarizations are isolated conjugacy classes under the
canonical depth-\(r\) projective monomial group.  No element of that group moves
\((\alpha,\beta)\) to \((\alpha+1,\beta-1)\).  Thus these data alone do not
define a Bruhat--Tits edge or building adjacency.
\end{corollary}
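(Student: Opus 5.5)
This is essentially a rewording of the rigidity part of Theorem~\ref{thm:normalizer}, so the plan is to read it off from that theorem. The argument needs to show two things: the $q-r+1$ maximal windows $\mathsf W_{\alpha,\beta}$ with $\alpha+\beta=q-r$ are pairwise nonconjugate in $\Mgp_{p;r,q}$, and in particular no element of $\Mgp_{p;r,q}$ sends $\mathsf W_{\alpha,\beta}$ onto $\mathsf W_{\alpha+1,\beta-1}$.

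\emph{Step 1 (conjugates of a window are windows only via the normalizer).} Take any $M=M(v,u,\lambda)\in\Mgp_{p;r,q}$; by Theorem~\ref{thm:auto} every element has this normal form. Suppose $M\mathsf W_{\alpha,\beta}M^{-1}=\mathsf W_{\alpha',\beta'}$ with $\alpha'+\beta'=q-r$.

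\begin{itemize}
\item The explicit conjugation formula of Theorem~\ref{thm:normalizer} applies to arbitrary $M$. A conjugate of $X(c,s)$ lies in a window of the form $X(\cdot,\cdot)$ only if its diagonal entries agree projectively, and as in that proof this forces $A=0$ for all $c\in p^\alpha\Rq$ and $s\in p^\beta\Rq$.
\item Taking $s=0$ and then $c=0$ gives $u\in p^\beta\Rq$ and $v\in p^\alpha\Rq$. This uses only the source window $(\alpha,\beta)$, not the target, so $M\in\mathsf N_{\alpha,\beta}$.
\end{itemize}

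\emph{Step 2 (the invariants are preserved).} Since $M\in\mathsf N_{\alpha,\beta}$, conjugation acts by $(c,s)\mapsto(\lambda^{-1}c,\lambda s)$ with $\lambda$ a unit. This preserves the lower parameter ideal $p^\alpha\Rq$ and the upper ideal $p^\beta\Rq$. The image window therefore has lower ideal $p^\alpha\Rq$, which forces $\alpha'=\alpha$ and then $\beta'=\beta$. So the conjugacy class of each maximal polarization contains no other maximal window. Taking $(\alpha',\beta')=(\alpha+1,\beta-1)$ gives the "no move" statement.

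\emph{Step 3 (building interpretation).} The final sentence is interpretive. I would justify it by noting that a Bruhat--Tits edge between adjacent vertices would require a group element, or at least a compatible chain, relating adjacent valuation data. Within $\Mgp_{p;r,q}$ none exists by Step 2, so these data alone supply no adjacency.

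The main obstacle is Step 1 for a general $M$. We must check that requiring the image to be of window form (lower-left divisible by $p^r$, diagonal projectively $1$) really forces $A=0$ on the whole window, including the unit factor $\Delta^{-1}$ and the odd-$p$ argument that $2$ is invertible.

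A cleaner fallback, which I would use if that check is messy, is an order-free invariant. Conjugation by the fractional-linear matrix preserves the subgroup of lower unipotents $\{X(c,0)\}$ only up to the computed formula, so instead compare the images under reduction of the parameter $c$ modulo the ideal. More simply, for the lower factor take the set of $X(c,0)$ with $c\in p^{\alpha}\Rq$. Its image's lower-left entry is $\Delta^{-1}C=\Delta^{-1}p^r c$ when $s=0$, and $\Delta$ is a unit. The lower-left valuation ideal $p^{r+\alpha}\Rq$ is therefore an invariant of the conjugate. Since $\mathsf W_{\alpha',\beta'}$ has lower-left ideal $p^{r+\alpha'}\Rq$, this yields $\alpha'=\alpha$ directly.
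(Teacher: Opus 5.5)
Your main argument is correct and is the paper's own route: the rigidity clause in the proof of Theorem~\ref{thm:normalizer} likewise shows that projectively equal diagonals in the conjugate of \(X(c,s)\) force \(A=0\) on the source window, hence \(M\in\mathsf N_{\alpha,\beta}\). The action \((c,s)\mapsto(\lambda^{-1}c,\lambda s)\) of that normalizer preserves the valuation ideals, and the corollary is just a restatement of this. Your fallback is not needed, and as written it gives only \(p^{r+\alpha}\Rq\subseteq p^{r+\alpha'}\Rq\), i.e.\ \(\alpha'\le\alpha\), and nothing at all when \(\alpha=q-r\); to get equality you must also apply it to \(M^{-1}\), or use the upper factor together with \(\alpha+\beta=\alpha'+\beta'\).
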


\begin{remark}[Normalizer scope]\label{rem:normalizerscope}
The normalizer is classified only inside \(\Mgp_{p;r,q}\), equivalently inside
the finite depth-\(r\) projective monomial group.  Nonunit-determinant
correspondences and normalizers in a larger group such as \(\PGL_2(\mathbf Q_p)\)
are not covered.  Pairwise nonconjugacy is internal to this frozen group, and the
residual unit torus is one-dimensional; no full symplectic, Weil, metaplectic,
tree, or building theorem follows.
\end{remark}

\section{Separation from the Fourier matrix}\label{sec:abelian}

For \(q>r\) the matrices \(H_{p;r,q}\) and \(F_{p^{q}}\) have projective monomial
automorphism groups of the \emph{same order} \(p^{2q}\varphi(p^{q})\), so no
order-based invariant separates them. We separate them by abelianization.

\begin{proposition}[Group invariants are equivalence invariants]\label{prop:inv}
If \(K=AHB\) with \(A,B\) monomial, then conjugation by \((A,B)\) carries the
projective monomial automorphism group of \(H\) isomorphically onto that of
\(K\). Consequently every isomorphism invariant of that group --- its centre,
derived subgroup, exponent, nilpotency class, or abelianization --- is an
invariant of the monomial equivalence class of the matrix.
\end{proposition}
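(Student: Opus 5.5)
The plan is to write down the conjugation map explicitly and check that it is well defined, a homomorphism, and bijective, working at the level of Definition~\ref{def:autgroup}. Write \(A=P_1D_1\) and \(B=D_2P_2\) with \(P_i\) permutation matrices and \(D_i\) unimodular diagonal matrices. A projective monomial automorphism of \(H\) is a pair \((X,Y)\) of monomial matrices with \(XHY=\gamma H\) for some unimodular scalar \(\gamma\), taken modulo the identification \((X,Y)\sim(\mu X,\mu^{-1}Y)\) and global scalars. This is just the quadruple \((\pi,\sigma,\alpha,\omega)\) of \eqref{eq:autdef} repackaged, with \(X\) built from \(\pi,\alpha\) and \(Y\) from \(\sigma,\omega\). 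Define
\[
\Phi(X,Y)=(AXA^{-1},\,B^{-1}YB).
\]
Then
\[
AXA^{-1}\,K\,B^{-1}YB=AXHYB=\gamma AHB=\gamma K,
\]
so \(\Phi(X,Y)\) is a projective monomial automorphism of \(K\). Monomial matrices form a group, so both conjugates are again monomial.

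Next I will verify that \(\Phi\) respects the identifications. Scaling \(X\) by \(\mu\) and \(Y\) by \(\mu^{-1}\) commutes with conjugation, and the global scalar \(\gamma\) is unchanged, so \(\Phi\) descends to the quotient group. The map is a homomorphism because conjugation is multiplicative in each coordinate. The order of composition must be treated carefully: the row component composes as \(X'X\) and the column component as \(YY'\). Conjugating by \(A\) on the left factor and by \(B^{-1}\) on the right factor respects both orders. Applying the same construction to \(H=A^{-1}KB^{-1}\) gives a two-sided inverse, so \(\Phi\) is an isomorphism.

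Finally, any isomorphism invariant of \(\Mgp\) is transported along \(\Phi\); this covers the centre, derived subgroup, exponent, nilpotency class, and abelianization. Hence monomially equivalent matrices have isomorphic invariants.

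I expect the only real obstacle to be bookkeeping: matching the quadruple formalism of Definition~\ref{def:autgroup}, with its specific placement of \(\alpha\) and \(\omega\) and the direction of \(\pi\) versus \(\sigma\), to the matrix pair \((X,Y)\). The point to check is that this matching is a group isomorphism and not an anti-isomorphism. If it did turn out to be an anti-isomorphism, the invariants would still be preserved, since \(g\mapsto g^{-1}\) converts an anti-isomorphism into an isomorphism. So the plan is robust to that sign issue.
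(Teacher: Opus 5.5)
Your proposal is correct and follows the same route as the paper: conjugate an automorphism pair by \((A,B)\), check that it fixes \(K=AHB\) projectively, and invert by conjugating back with \((A^{-1},B^{-1})\). Your version spells out the bookkeeping that the paper's one-line proof leaves implicit: the matrix-pair encoding of the quadruples, the composition order \((X'X,\,YY')\), and the descent to the scalar quotient.
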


\begin{proof}
If \((\pi,\sigma,\alpha,\omega)\) fixes \(H\) projectively then the conjugated
quadruple fixes \(AHB\) projectively, and the assignment is a group isomorphism
with inverse given by conjugating back.
\end{proof}

We record Proposition~\ref{prop:inv} explicitly because the automorphism group is
often used only as an order-valued catalogue statistic; the separation below uses
a strictly finer invariant. For comparison, the standard separating invariants in
this literature are the defect \cite{TadejZyczkowskiDefect} and fingerprint-type
data \cite{Szollosi}.

\begin{theorem}[Abelianization separation]\label{thm:ab}
Write \(\varphi:\Mgp_{p;r,q}\to(\Z/p^{r}\Z)^{\times}\) for
\(\varphi(c,s,\lambda)=\lambda\bmod p^{r}\). Then \(\varphi\) is a surjective
homomorphism with \(\ker\varphi=[\Mgp,\Mgp]\), so
\[
\bigl|\Mgp_{p;r,q}^{\mathrm{ab}}\bigr|=\varphi(p^{r})=p^{r-1}(p-1),
\]
independently of \(q\). By contrast the projective monomial automorphism group
\(\mathfrak F_{p,q}\) of \(F_{p^{q}}\) has
\(\bigl|\mathfrak F_{p,q}^{\mathrm{ab}}\bigr|=p^{q-1}(p-1)\). Hence for \(q>r\)
the two groups are non-isomorphic and, by Proposition~\ref{prop:inv},
\(H_{p;r,q}\) and \(F_{p^{q}}\) are monomially inequivalent.
\end{theorem}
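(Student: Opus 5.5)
The plan has three parts: show that \(\varphi\) is a surjective homomorphism, prove \(\ker\varphi=[\Mgp,\Mgp]\) through a triangular factorization, and then compute the abelianization on the Fourier side. Throughout I work with the normal form and composition law \eqref{eq:complaw} of Theorem~\ref{thm:auto}.

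\emph{Homomorphism, surjectivity, and the easy inclusion.} Reducing the third coordinate of \eqref{eq:complaw} modulo \(p^{r}\) gives \((\lambda\lambda'+\varepsilon cs')/u\equiv\lambda\lambda'\), because \(u=1+\varepsilon sc'\equiv1\). So \(\varphi\) is a homomorphism. Surjectivity is immediate from the triples \((0,0,\lambda)\). Since the target is abelian, \([\Mgp,\Mgp]\subseteq\ker\varphi\).

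\emph{Reverse inclusion.} I work with matrices in the normal form. A direct multiplication gives
\[
U_sD_\mu L_c=\begin{pmatrix}\mu+\varepsilon cs&s\\ \varepsilon c&1\end{pmatrix},
\qquad D_\mu=\begin{pmatrix}\mu&0\\0&1\end{pmatrix}.
\]
Hence every element \((c,s,\lambda)\) factors as upper unipotent, times diagonal with \(\mu=\lambda-\varepsilon cs\), times lower unipotent. If \(\lambda\in\ker\varphi\), then \(\mu\in1+p^{r}\Rq\). It therefore suffices to place three kinds of element in the commutator subgroup.

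\begin{enumerate}
\item[(i)] \emph{Unipotents.} I take commutators with \(D_2\); since \(p\) is odd, \(2\) is a unit. Conjugation gives \(D_2U_sD_2^{-1}=U_{2s}\) and \(D_2L_cD_2^{-1}=L_{c/2}\). Thus \([D_2,U_s]=U_s\) and \([D_2,L_c]=L_{-c/2}\) lie in the commutator subgroup, and this gives every unipotent.
\item[(ii)] \emph{Diagonals.} Since \(1+p^{r}\Rq\) is cyclic and generated by any element \(\equiv1+tp^{r}\pmod{p^{r+1}}\) with \(p\nmid t\), it is enough to produce one commutator whose diagonal part is such an element. I take the Weyl commutator \([L_1,U_1]\) from Theorem~\ref{thm:weyl}. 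Its normalized upper-left entry is \((1-\varepsilon)/(1+\varepsilon+\varepsilon^{2})\equiv1-2p^{r}\pmod{p^{r+1}}\).
\item[(iii)] \emph{Conclusion.} In the factorization of \([L_1,U_1]\), the diagonal part is \(\mu=\lambda-\varepsilon cs\) with \(c=-\varepsilon\), so \(\varepsilon cs\equiv0\pmod{p^{r+1}}\) and \(\mu\equiv1-2p^{r}\pmod{p^{r+1}}\). Its unipotent factors are commutators by (i), so \(D_\mu\) itself is a product of commutators. Hence \(\ker\varphi\subseteq[\Mgp,\Mgp]\), and the order of the abelianization is \(\varphi(p^{r})\).
\end{enumerate}

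\emph{Fourier side.} I first determine \(\mathfrak F_{p,q}\) by the same normalization as in the proof of Theorem~\ref{thm:auto}. Set \(a=0\) and \(b=0\), then use faithfulness of \(\omega^{(\cdot)}\). This turns \eqref{eq:autdef} into the additive identity \(a\sigma(b)+cb\equiv as+\pi(a)b\pmod{p^{q}}\). Setting \(a=1\) gives \(\sigma(b)=s+\lambda b\), and then \(\pi(a)=c+\lambda a\). Bijectivity forces \(\lambda\in\Rq^{\times}\). This recovers order \(p^{2q}\varphi(p^{q})\), with composition \((c,s,\lambda)(c',s',\lambda')=(c+\lambda c',\,\lambda' s+s',\,\lambda\lambda')\); I would recompute this law directly rather than trust the analogy. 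The map \(\lambda\) is then a homomorphism onto \(\Rq^{\times}\). Its kernel consists of the translations and modulations, and it lies in the commutator subgroup by the same \(D_2\)-commutator trick as in (i). Hence \(|\mathfrak F_{p,q}^{\mathrm{ab}}|=\varphi(p^{q})\).

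\emph{Separation and the main obstacle.} For \(q>r\) the two abelianization orders \(\varphi(p^{r})\) and \(\varphi(p^{q})\) differ. Proposition~\ref{prop:inv} then shows that \(H_{p;r,q}\) and \(F_{p^{q}}\) are monomially inequivalent. The main obstacle is step (ii), the diagonal part of the reverse inclusion, where the valuation bookkeeping must be checked carefully. If the explicit computation proves awkward, my fallback is the following. Let \(T=\{D_\mu:\mu\in1+p^{r}\Rq\}\). The intersection \(T\cap[\Mgp,\Mgp]\) is a subgroup of the cyclic \(p\)-group \(T\), so it is all of \(T\) once it contains one element of full order \(p^{q-r}\).
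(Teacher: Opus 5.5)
Your proposal is correct and is essentially the paper's proof. It uses the same ingredients: the homomorphism check by reducing \eqref{eq:complaw} modulo \(p^{r}\), and the \(D_2\)-commutator trick for the two unipotent families. It then uses the Weyl commutator \([L_1,U_1]\) (the paper's \([T_1,S_1]\)) to supply a dilation with \(v_p(\mu-1)=r\) exactly, a triangular factorization of \(\ker\varphi\), and the same \((\Rq\times\Rq)\rtimes\Rq^{\times}\) computation on the Fourier side. You also make explicit the congruence \(\mu\equiv1-2p^{r}\pmod{p^{r+1}}\) and the cyclicity of \(1+p^{r}\Rq\), which the paper only asserts.

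Two harmless points are worth recording. First, the matrices \(\left(\begin{smallmatrix}\lambda&s\\ \varepsilon c&1\end{smallmatrix}\right)\) you compute with realize an anti-isomorphic copy of \eqref{eq:complaw}. This does not affect commutator subgroups or abelianizations. Second, both your argument and the paper's tacitly need \(q\ge r\): otherwise \(\lambda\bmod p^{r}\) is undefined, and for \(q<r\) the group is the Fourier one, with abelianization of order \(\varphi(p^{q})\).
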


\begin{proof}
That \(\varphi\) is a homomorphism follows from \eqref{eq:complaw}: modulo
\(p^{r}\) the factor \(u=1+\varepsilon sc'\) is \(1\) and the term
\(\varepsilon cs'\) vanishes, so \(\lambda''\equiv\lambda\lambda'\). Hence
\([\Mgp,\Mgp]\subseteq\ker\varphi\).

For the reverse inclusion, write \(T_c=(c,0,1)\), \(S_s=(0,s,1)\) and
\(D_\lambda=(0,0,\lambda)\). From \eqref{eq:complaw} one computes
\[
[D_2,T_c]=T_c,\qquad [D_2,S_{-2x}]=S_x ,
\]
so every \(T\) and every \(S\) is a commutator; and \([T_1,S_1]\) is, after
factoring out its \(T\) and \(S\) parts, a dilation \(D_\mu\) with
\(v_p(\mu-1)=r\) exactly, so the dilations in \(\ker\varphi\) are generated by
commutators as well. Since every element of \(\ker\varphi\) factors as
\(T_cS_{x}D_{\lambda_1}\) with \(\lambda_1\equiv1\pmod{p^{r}}\), we conclude
\(\ker\varphi\subseteq[\Mgp,\Mgp]\).

For completeness, a Fourier automorphism may be written in the normal form
\(\pi(a)=x+\lambda a\), \(\sigma(b)=s+\lambda b\), with the corresponding row
and column phases determined by \(x,s\).  Reparameterize the column translation
by \(y=s/\lambda\).  In the coordinates \((x,y,\lambda)\) the group law is
\[
\mathfrak F_{p,q}\cong (\Rq\times\Rq)\rtimes\Rq^\times,
\qquad
(x,y,\lambda)(x',y',\lambda')
=(x+\lambda x',\ y+\lambda^{-1}y',\ \lambda\lambda').
\]
Thus the two translation coordinates carry the weights \(\lambda\) and
\(\lambda^{-1}\), and projection onto \(\Rq^\times\) is a surjective
homomorphism.  If \(D_2=(0,0,2)\), then
\[
[D_2,(x,0,1)]=(x,0,1),\qquad
[D_2,(0,-2y,1)]=(0,y,1),
\]
so both translation factors lie in the commutator subgroup (recall that \(p\)
is odd).  Conversely the quotient by the translations is the abelian group
\(\Rq^\times\).  Hence
\[
\mathfrak F_{p,q}^{\mathrm{ab}}\cong\Rq^\times,
\qquad
|\mathfrak F_{p,q}^{\mathrm{ab}}|=p^{q-1}(p-1).
\]
\end{proof}

\begin{remark}
The mechanism is worth isolating: the commutator subgroup of \(\Mgp\) reaches
exactly the dilations of level \(r\), so the abelianization \emph{saturates} at
\(\varphi(p^{r})\) while the Fourier abelianization grows with \(q\). It is the
depth \(r\) of the principal-unit character, not the size of the matrix, that is
visible in the abelianization.
\end{remark}

\subsection{Defect and a low-order fingerprint slice}\label{ssec:defect}

We complement the group-theoretic separation by computing two standard
catalogue invariants for a nonlinear member beyond order~\(9\).  For a dephased
complex Hadamard matrix \(K=(K_{ij})\) of order \(n\), write
\(K_{ij}e^{itR_{ij}}\) with \(R_{0j}=R_{i0}=0\).  Linearizing row
orthogonality gives, for every \(i<j\),
\[
 \sum_{k=0}^{n-1}K_{ik}\overline{K_{jk}}(R_{ik}-R_{jk})=0.
 \tag{6.1}\label{eq:defectsystem}
\]
The defect is the real nullity of this system in the \((n-1)^2\) dephased
variables \cite{TadejZyczkowskiDefect}.

\begin{proposition}[Reproducible computed invariants]\label{prop:defect313}
For the nonlinear order-\(27\) member \(H_{3;1,3}\),
\[
 \operatorname{def}(H_{3;1,3})=46,
 \qquad
 \operatorname{def}(F_{27})=28.
\]
Thus the defect independently separates \(H_{3;1,3}\) from the Fourier matrix.

The complete \(2\times2\) fingerprint slices of the two matrices agree.  More
precisely, every \(2\times2\) minor has magnitude
\[
 2\sin\!\left(\frac{\pi\delta}{27}\right),
 \qquad 0\le\delta\le13,
\]
and the common multiplicity \(m_\delta\) is
\[
 m_\delta=
 \begin{cases}
 5103,&\delta=0,\\
 19683,&\delta=9,\\
 13122,&\delta\in\{3,6,12\},\\
 6561,&\delta\in\{1,2,4,5,7,8,10,11,13\}.
 \end{cases}
\]
\end{proposition}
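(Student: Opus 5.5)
This is a computational statement, so the plan is to make it reproducible: build both matrices exactly, compute the defect as a rank computation, and enumerate the \(2\times2\) minors. Wherever possible, the answers will be cross-checked against closed-form reasoning.

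\emph{Building the matrices.} Take \(p=3\), \(r=1\), \(q=3\). Then \(\ell_{1,3}(z)=3^{-1}\log(1+3z)\bmod 27\) can be computed exactly in integers by truncating the series. Every term with \(k\ge 5\) has valuation at least \(k-1-v_3(k)\ge 3\), so the terms up to \(z^4\) suffice. Set \(H(a,b)=\omega^{\ell(ab)}\) with \(\omega=e^{2\pi i/27}\), and store the exponent matrix \(E_{ab}=\ell(ab)\in\Z/27\Z\) rather than floating-point entries. Since \(H_0\equiv1\) and \(H_a(0)=1\), \(H\) is already dephased, and likewise \(F_{27}\) with \(E_{ab}=ab\).

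\emph{Defect.} For each pair \(i<j\), \eqref{eq:defectsystem} gives one complex equation, hence two real equations, in the \(26^2=676\) real unknowns \(R_{ik}\). The coefficients are \(\omega^{E_{ik}-E_{jk}}\), and its real and imaginary parts lie in \(\mathbf Q(\omega)\).

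To avoid floating-point rank ambiguity, I would write each coefficient in the integral basis \(1,\omega,\dots,\omega^{17}\) of \(\mathbf Q(\zeta_{27})\), whose degree is \(\varphi(27)=18\). Each complex equation then becomes up to \(18\) rational linear equations in the real \(R\)'s; this is legitimate because the \(R\)'s are real and \(\omega^k\), \(\overline{\omega}^k\) expand in the same basis. This yields an exact rational matrix of size about \(351\cdot18\times676\). I would compute its rank modulo one or two large primes \(\ell\equiv1\pmod{27}\), or over \(\mathbf Q\), and report the nullity \(676-\mathrm{rank}\).

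As a sanity check, the Fourier value \(28\) should match the known formula for \(\operatorname{def}(F_{p^k})\); for \(n=27\), \(\sum_{d\mid27}\) of the usual count gives \(28\). This validates the pipeline before reading off \(46\) for \(H_{3;1,3}\).

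\emph{Fingerprint slice.} A \(2\times2\) minor on rows \(a,a'\) and columns \(b,b'\) is \(\omega^{E_{ab}+E_{a'b'}}-\omega^{E_{ab'}+E_{a'b}}\). Its magnitude is \(2\sin(\pi\delta/27)\), where \(\delta\) is the reduced representative in \([0,13]\) of \(E_{ab}+E_{a'b'}-E_{ab'}-E_{a'b}\bmod 27\). So the slice is a purely integer histogram over all \(\binom{27}{2}^2=123201\) row-pair/column-pair choices. The listed multiplicities sum to \(5103+19683+3\cdot13122+9\cdot6561=123201\), which I would check first.

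For agreement with \(F_{27}\), I would also give a conceptual reason. For Fourier, the exponent is \((a-a')(b-b')\). For \(H\), the Di\c{t}\u{a} identity \eqref{eq:dita} and the M\"obius automorphisms of Theorem~\ref{thm:auto} suggest that the second difference \(\ell(ab)+\ell(a'b')-\ell(ab')-\ell(a'b)\) has the same distribution. I would attempt a bijective proof via valuations of \(a-a'\) and \(b-b'\), but I would rely on the enumeration.

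\emph{Main obstacle.} The main obstacle is the exact rank computation. The numerical rank of a \(\sim700\)-column complex system is delicate, so the cyclotomic-basis reduction with modular rank, repeated at two primes to rule out accidental rank drop, is the step I would invest in.
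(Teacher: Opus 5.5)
Your fingerprint plan is the paper's own argument: an exhaustive integer enumeration of the exponent second differences over all $\binom{27}{2}^2$ minors. Your construction of $\ell_{1,3}$ is also fine. The gap is in the exact defect computation.

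Expanding each coefficient $\omega^{m}$ in the basis $1,\omega,\dots,\omega^{17}$ and setting every coordinate to zero is \emph{not} equivalent to \eqref{eq:defectsystem}. These powers are independent over $\mathbf{Q}$, but over $\mathbf{R}$ they span only the $2$-dimensional space $\mathbf{C}$. So $\sum_t\omega^t y_t=0$ with real $y_t$ does not force $y_t=0$. Reality of the $R$'s is not enough; you would need them rational. (Likewise $\sin(2\pi k/27)\notin\mathbf{Q}(\omega)$ for $27\nmid k$, since $i\notin\mathbf{Q}(\zeta_{27})$.)

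Your rational system amounts to imposing \eqref{eq:defectsystem} simultaneously for every Galois conjugate $\omega\mapsto\omega^{g}$. Its nullity is therefore only the dimension of the largest $\mathbf{Q}$-rational subspace of the defect space, which is a lower bound.

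The $F_{27}$ sanity check cannot expose this. The Fourier defect space is cut out by invariance of $\sum_k\omega^{mk}R_{\cdot k}$ under row translation by $m$, and that condition depends only on $\langle m\rangle=\langle gm\rangle$. So it is Galois-stable, and you would get $28$ regardless.

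In this family the defect space is not rational. At order $9$ one has $\ell_{1,2}(z)\equiv z+3z^2+3z^3$, so $4\ell_{1,2}(z)\equiv\ell_{1,2}(4z)\pmod 9$. Hence the conjugate $\omega\mapsto\omega^4$ is the row relabelling $a\mapsto 4a$, which in coordinates $a=3A+i$ is $A\mapsto A+i$. Write $b=j+3B$. The six defect directions of $H_{3;1,2}$ beyond the four Di\c{t}\u{a} directions satisfy $\sum_B\zeta_3^{B}R_{(i,A),(B,j)}=\zeta_3^{A}\xi_i(j)$, with $\xi_1,\xi_2$ determined by $\xi_0$ through fixed nonreal phases. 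The relabelling multiplies $\xi_i$ by $\zeta_3^{i}$, so none of these directions survives. Your recipe would return $4=\operatorname{def}(F_9)$ instead of $10$. Unless you first prove that the $46$-dimensional space for $H_{3;1,3}$ is $\mathbf{Q}$-rational, the number your computation produces cannot be identified with the defect.

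The correct exact reduction keeps $A$ and $\overline A$ together. Their entries $\omega^{\pm m}$ lie in $\mathbf{Q}(\omega)$, and the complex kernel of $\left(\begin{smallmatrix}A\\ \overline A\end{smallmatrix}\right)$ is conjugation-stable. So the real nullity equals $676-\operatorname{rank}_{\mathbf{Q}(\omega)}\left(\begin{smallmatrix}A\\ \overline A\end{smallmatrix}\right)$.

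Modular arithmetic then means reducing modulo a prime of $\Z[\omega]$ above $\ell\equiv1\pmod{27}$, sending $\omega$ to a primitive $27$th root of unity in $\mathbf{F}_\ell$. That is where your congruence condition is actually needed; for a rational matrix it is irrelevant. Even then, a mod-$\ell$ rank of $630$ only certifies $\operatorname{def}\le46$. A second prime is a heuristic, and equality needs $46$ exact null vectors or an exact rank over $\mathbf{Q}(\omega)$.

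The paper does not attempt exactness. It computes the rank in floating point by SVD and pivoted QR and reports the singular-value gap ($5.6\times10^{-1}$ against $5.2\times10^{-15}$). It also validates the pipeline on $H_{3;1,2}$, whose defect is known to be $10$ — precisely the check that would have caught your reduction.
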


\begin{corollary}[Comparison of separating invariants]\label{cor:invcompare}
Among the four nonlinear members computed here, the standard invariants do not
suffice:
\[
\begin{array}{lccc}
 & \operatorname{def}(H_{p;r,q}) & \operatorname{def}(F_{p^{q}})
 & \text{separated?}\\\hline
H_{3;1,2}\ (n=9)   & 10 & 4  & \text{yes}\\
H_{3;1,3}\ (n=27)  & 46 & 28 & \text{yes}\\
H_{5;1,2}\ (n=25)  & 16 & 16 & \text{no}\\
H_{3;2,3}\ (n=27)  & 28 & 28 & \text{no}
\end{array}
\]
The defect therefore separates two of the four, and the \(2\times2\) fingerprint
separates none --- it agrees even in the separating case
\(H_{3;1,3}\) versus \(F_{27}\). By contrast Theorem~\ref{thm:ab} separates
\emph{every} nonlinear member from the corresponding Fourier matrix, since
\(\bigl|\Mgp^{\mathrm{ab}}_{p;r,q}\bigr|=\varphi(p^{r})\) is independent of
\(q\) while \(\bigl|\mathfrak F^{\mathrm{ab}}_{p,q}\bigr|=p^{q-1}(p-1)\) grows;
for the two undetected cases above the abelianization orders are \(4\) versus
\(20\), and \(6\) versus \(18\).
\end{corollary}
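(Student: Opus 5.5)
The statement to be justified is Corollary~\ref{cor:invcompare}. It has three parts: the defect column of the table, the assertion that the $2\times2$ fingerprint separates none of the four pairs, and the assertion that abelianization separates all four. I would treat them separately, because only the last is a theorem-level consequence of earlier results; the first two are reproducible computations.

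\emph{Abelianization column.} This part is pure bookkeeping from Theorem~\ref{thm:ab}. For each of the four parameter triples with $q>r$, evaluate
\[
\bigl|\Mgp^{\mathrm{ab}}_{p;r,q}\bigr|=\varphi(p^{r}),\qquad \bigl|\mathfrak F^{\mathrm{ab}}_{p,q}\bigr|=p^{q-1}(p-1).
\]
The four cases give:
\begin{itemize}
\item $(3,1,2)$: $2$ versus $6$;
\item $(3,1,3)$: $2$ versus $18$;
\item $(5,1,2)$: $4$ versus $20$;
\item $(3,2,3)$: $6$ versus $18$.
\end{itemize}
In general, $q>r$ forces $p^{q-1}>p^{r-1}$, so the two orders always differ. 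Proposition~\ref{prop:inv} then converts this into monomial inequivalence, which proves the claim for every nonlinear member and in particular for the two undetected cases quoted.

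\emph{Defect column.} Here I would build the real linear system \eqref{eq:defectsystem} explicitly for each dephased matrix. $H_{p;r,q}$ is already dephased, since $H_0\equiv1$ and $H_a(0)=1$. The system has $(n-1)^2$ real unknowns $R_{ik}$, and each pair $i<j$ contributes a real and an imaginary equation. The defect is $(n-1)^2$ minus the rank. To keep floating-point rank from being fragile, I would compute the rank exactly over the cyclotomic field $\mathbf Q(\omega)$ with $\omega=e^{2\pi i/p^{q}}$. The complex equations and their conjugates span the same real solution space, so I would pass to the $\mathbf Q$-structure via the basis $1,\omega,\dots,\omega^{\varphi(p^q)-1}$ and compute the rank exactly.

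As sanity checks I would use the known Fourier defects, in particular the formula $\operatorname{def}(F_{p^q})$ for prime powers, which gives $4$, $28$, $16$, $28$ for the four cases. For $H_{3;1,2}$ I would use the value $10$ cited from \cite{Karlsson,TadejZyczkowskiDefect}. The entry $46$ for $H_{3;1,3}$ is taken from Proposition~\ref{prop:defect313}. With the four numbers in hand, the defect separates a pair exactly when the two entries in its row differ, which happens in two of the four rows.

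\emph{Fingerprint column.} For $H_{3;1,3}$ the agreement of fingerprints is Proposition~\ref{prop:defect313}. For the other three cases I would argue structurally where possible. A $2\times2$ minor of a Butson matrix with $p^q$-th root entries has magnitude $|1-\zeta|$ with $\zeta=\omega^{d}$, so the fingerprint is the distribution of the exponent $d=\ell(ab)-\ell(ab')-\ell(a'b)+\ell(a'b')$ modulo sign. I would try to show that this distribution matches the Fourier one, $d=(a-a')(b-b')$. Using the Di\c{t}\u{a} identity \eqref{eq:dita} and the fact that $t\mapsto\ell(t)$ is a bijection preserving valuation, I would count quadruples with a prescribed value of $d$ by fibering over $(a,a')$. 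If that counting argument stalls, I would fall back to direct enumeration, which is feasible since $n\le27$ and there are at most $\binom{27}{2}^2$ minors.

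The main obstacle is certifying the defect ranks exactly rather than numerically, so that the entries $16=16$ and $28=28$, and hence the "no" rows, are trustworthy.
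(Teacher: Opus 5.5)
Your proposal is correct and has the same skeleton as the paper. The paper gives no separate proof of this corollary. The abelianization column is exactly the arithmetic from Theorem~\ref{thm:ab} combined with Proposition~\ref{prop:inv}, as you do. The defect and fingerprint columns are computations: the appendix script evaluates all four defect pairs by floating-point SVD and pivoted QR, but enumerates the \(2\times2\) fingerprint only for \(H_{3;1,3}\) versus \(F_{27}\).

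You differ from the paper in two places, and both add rigor.

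\emph{Exact rank.} Computing the rank exactly over \(\mathbf Q(\omega)\) upgrades a certificate that the paper itself calls non-formal, but be careful what ``pass to the \(\mathbf Q\)-structure'' means. The unknowns \(R_{ik}\) are real, and \(1,\omega,\dots,\omega^{\varphi(p^q)-1}\) are linearly independent over \(\mathbf Q\) but not over \(\mathbf R\). So if you split each equation into its rational coordinate equations while keeping \(R\) real, you compute only \(\dim_{\mathbf Q}\bigl(V\cap\mathbf Q^{(n-1)^2}\bigr)\). That undercounts the defect unless the defect space \(V\) happens to be defined over \(\mathbf Q\). The safe exact computation is Gaussian elimination over \(\mathbf Q(\omega)\) on the stacked system \([A;\overline A]\). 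Its kernel is conjugation-stable, so its complex nullity equals the real nullity. Storing field elements in the power basis is fine; splitting equations is not.

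\emph{Fingerprint.} Your structural argument goes beyond the paper, whose shipped check covers only one of the four pairs. It closes at once if you use the additivity of the logarithm instead of the Di\c{t}\u{a} identity. Fix rows \(a\neq a'\). Then \(\ell(ab)-\ell(a'b)=\ell(m(b))\) with \(m(b)=(a-a')b/(1+\varepsilon a'b)\). As in the proof of Theorem~\ref{thm:butson}, \(m\) takes each value of \(p^{\kappa}R_q\) exactly \(p^{\kappa}\) times, where \(\kappa=v_p(a-a')\). Since \(\ell\) is a valuation-preserving bijection, it permutes \(p^{\kappa}R_q\). Hence \(f(b)=\ell(ab)-\ell(a'b)\) has the same value multiset as \((a-a')b\). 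It follows that \(d=f(b)-f(b')\) has, for every fixed \((a,a')\), the same distribution over \((b,b')\) as \((a-a')(b-b')\). This proves that the complete \(2\times2\) fingerprints of \(H_{p;r,q}\) and \(F_{p^q}\) agree for all \(p,r,q\), not just the four computed members, so the enumeration fallback is unnecessary.
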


\begin{remark}\label{rem:whyab}
This is the reason Proposition~\ref{prop:inv} is stated explicitly. The
abelianization is not merely an alternative to the defect and the fingerprint on
this family: it is the only one of the three that works at every parameter, and
it does so by a theorem rather than a computation. The failures above are not
accidents of small size --- the defect and the fingerprint both measure local
deformation data, which the principal-unit family shares with the Fourier matrix
whenever the nonlinearity is confined to a single conductor level, whereas the
abelianization sees the depth \(r\) of the character directly.
\end{remark}

\begin{proof}[Computational certificate]
Modulo \(27\), the normalized logarithm is
\[
 \ell_{1,3}(z)=z+12z^2+3z^3.
\]
The script in Appendix~\ref{app:code} constructs the exponent matrix, forms the
real system \eqref{eq:defectsystem}, and computes its rank independently by
singular-value decomposition and pivoted QR.  For \(H_{3;1,3}\) the system has
\(676\) variables and rank \(630\); the smallest nonzero singular value is
\(5.5976\times10^{-1}\), while the largest singular value in the numerical
nullspace is about \(5.2\times10^{-15}\).  Pivoted QR gives the same rank at absolute
thresholds \(10^{-8},10^{-10},10^{-12}\).  The corresponding Fourier rank is
\(648\).

For rows \(a<b\) and columns \(c<d\), if
\[
 \Delta=e_{ac}+e_{bd}-e_{ad}-e_{bc}\pmod{27},
 \qquad
 \delta=\min(\Delta,27-\Delta),
\]
then the minor magnitude is exactly \(2\sin(\pi\delta/27)\).  Exhaustive
integer enumeration of all \(\binom{27}{2}^2=123201\) minors gives the displayed
multiplicities.  The calculation first recovers defect \(10\) for
\(H_{3;1,2}\), providing an internal validation against the known order-\(9\)
value.  The rank statement is a reproducible floating-point computation with a
large displayed gap, not a formal proof-assistant certification of algebraic
rank.
\end{proof}

\begin{remark}
The agreement of the \(2\times2\) fingerprint slices shows why a single
low-order fingerprint statistic is insufficient here.  Conversely, the defect
already separates \(H_{3;1,3}\) from \(F_{27}\), in agreement with the stronger
all-parameter abelianization separation of Theorem~\ref{thm:ab}.
\end{remark}

\section{Verification}\label{sec:verification}

Every structural theorem above has a proof valid for all odd \(p\) and all
\(r,q\ge1\); no theorem depends on the computations in this section.  We
separate the reproducible checks shipped with the publication archive from
additional development-time regressions that are recorded here only for
provenance.

\paragraph{Reproducible checks supplied with the archive.}
Appendix~\ref{app:code} contains an independent implementation of the defect and
fingerprint computations.  It reproduces the defects \(10,46,16,28\) at the
stated examples and the complete \(2\times2\) fingerprint slice for
\(H_{3;1,3}\) and \(F_{27}\).  Two additional exact-arithmetic scripts check the
polarized-window results.  The first verifies the scalar-window criterion,
maximal-polarization count, radical formulas, and perfect residual pairing on a
broad formula-level parameter box and exhaustively on the small rings listed in
the script.  The second checks the conjugation formula, normalizer criterion,
residual action, and order formulas by exact modular arithmetic on its explicitly
stated representative grids.  The latter is a sampled regression, not an
exhaustive finite-parameter proof; the all-parameter normalizer theorem is proved
in Theorem~\ref{thm:normalizer}.

\paragraph{Additional development-time checks not shipped as publication
artifacts.}
During development we also performed symbolic checks of orthogonality, the
row-ratio criterion, the normal form and composition law, and the associator and
Weyl-commutator identities; enumerated the automorphism group at
\((p,q)=(3,1),(3,2),(3,3)\), obtaining orders \(18,486,13122\); exhaustively
checked the cocycle threshold over the previously recorded small parameter
boxes; checked the centrality criterion on \(20{,}190\) parameter pairs; and
computed commutator-subgroup orders at \(q=1,2,3\).  Those development checks
are not part of the supplied reproducibility certificate and are not used as
proofs of any theorem.

\section{Open questions}\label{sec:open}

\begin{enumerate}
\item \emph{Equivalence orbits.} Only the order-\(9\) member \((3,1,2)\) has been
placed in a known orbit \cite{Karlsson}.  Proposition~\ref{prop:defect313}
gives defect \(46\) for \(H_{3;1,3}\), separating it from \(F_{27}\), but
does not identify its equivalence orbit.  The supplementary computations give
defects \(16\) and \(28\) for \((5,1,2)\) and \((3,2,3)\), equal to the
corresponding Fourier defects; higher fingerprint slices or direct equivalence
searches remain natural next steps.
\item \emph{General cocyclicity.} Theorem~\ref{thm:cocycle} rules out only the
canonical cyclic indexing. Is \(H_{p;r,q}\) cocyclic over some group for
\(q>r\)?
\item \emph{Noncanonical or global Heisenberg models.}
Theorem~\ref{thm:polarized} gives exact canonical residual Heisenberg quotients
on the maximal valuation windows when \(q>r\), while
Theorem~\ref{thm:normalizer} shows that those windows are pairwise nonconjugate
inside \(\Mgp_{p;r,q}\).  Are there natural noncanonical subgroups with scalar
commutator outside this valuation-polarized construction?
\item \emph{External polarization correspondences.}  Is there a natural
nonunit-determinant correspondence arising from the Hadamard kernel itself that
moves one maximal polarization to another?  Corollary~\ref{cor:nobuilding}
shows that no such adjacency exists inside the present projective monomial
group; without an operator acting on the kernel data, no building interpretation
is asserted.
\item \emph{Other primes and mixed moduli.} The construction uses only that
\(1+p^{r}\Zp\) is procyclic; the case \(p=2\) requires separate treatment.
\end{enumerate}

\appendix
\section{Reproducible invariant computation}\label{app:code}

The source archive accompanying this paper contains an independent implementation
of the invariant computation, a pinned requirements file, a recorded output
transcript, and SHA-256 checksums.  The implementation was reconstructed from
the displayed formulas after the earlier working script was unavailable; it
reproduces every numerical claim below, but no identity with that historical
script is asserted.  A clean run is:
\begin{lstlisting}[language=bash]
python -m venv .venv
. .venv/bin/activate
python -m pip install -r requirements.txt
python paperh_invariants.py | tee OUTPUT.txt
\end{lstlisting}
The recorded environment used Python~3.13.5, NumPy~2.3.5, and SciPy~1.17.0.
The program constructs the normalized logarithm directly from its finite
\(p\)-adic series, forms the dephased linearization \eqref{eq:defectsystem},
checks rank agreement by SVD and pivoted QR at three thresholds, and enumerates
all \(2\times2\) minors by exact exponent differences.

\begin{lstlisting}[language=Python,
caption={Independent implementation of the defect and fingerprint computations.},
label={lst:invariants}]
#!/usr/bin/env python3
"""Reconstructed publication verifier for Paper H catalogue invariants.

This is a fresh implementation from the formulas stated in the manuscript,
not the unrecovered historical script. It reproduces the defect and complete
2x2 fingerprint claims used in the paper.
"""
from __future__ import annotations

from collections import Counter
import math
import numpy as np
from scipy.linalg import qr


def v_p(n: int, p: int) -> int:
    if n == 0:
        return 10**9
    v = 0
    while n % p == 0:
        n //= p
        v += 1
    return v


def normalized_log_mod(p: int, r: int, q: int, z: int) -> int:
    """Return p^{-r} log(1+p^r z) modulo p^q for odd p.

    The n-th term has coefficient (-1)^(n+1) p^{r(n-1)}/n.
    We cancel the p-part of n before modular inversion of the prime-to-p part.
    For the small parameters used by the verification, 200 terms is far beyond
    the point where every possible contribution has p-adic valuation >= q.
    """
    mod = p**q
    z %= mod
    total = 0
    for n in range(1, 201):
        t = v_p(n, p)
        val = r * (n - 1) - t
        if val >= q:
            continue
        den0 = n // (p**t)
        num0 = p ** (r * (n - 1) - t)
        coeff = (num0 % mod) * pow(den0, -1, mod) % mod
        if n % 2 == 0:
            coeff = (-coeff) % mod
        total = (total + coeff * pow(z, n, mod)) % mod
    return total


def exponent_matrix(p: int, r: int, q: int) -> np.ndarray:
    n = p**q
    E = np.empty((n, n), dtype=np.int64)
    vals = [normalized_log_mod(p, r, q, z) for z in range(n)]
    for a in range(n):
        for b in range(n):
            E[a, b] = vals[(a * b) % n]
    return E


def fourier_exponent_matrix(n: int) -> np.ndarray:
    a = np.arange(n, dtype=np.int64)[:, None]
    b = np.arange(n, dtype=np.int64)[None, :]
    return (a * b) % n


def defect_system(E: np.ndarray) -> np.ndarray:
    n = int(E.shape[0])
    omega = np.exp(2j * np.pi / n)
    K = omega ** E
    nv = (n - 1) ** 2

    def idx(i: int, k: int) -> int:
        return (i - 1) * (n - 1) + (k - 1)

    rows = []
    for i in range(n):
        for j in range(i + 1, n):
            phase = K[i, :] * np.conjugate(K[j, :])
            row = np.zeros(nv, dtype=np.complex128)
            for k in range(1, n):
                if i:
                    row[idx(i, k)] += phase[k]
                if j:
                    row[idx(j, k)] -= phase[k]
            rows.append(row.real)
            rows.append(row.imag)
    return np.asarray(rows, dtype=float)


def defect_certificate(E: np.ndarray):
    A = defect_system(E)
    nvars = A.shape[1]
    singular = np.linalg.svd(A, compute_uv=False)
    rank_svd = int(np.sum(singular > 1e-10))

    # Pivoted QR on A.T has the same row rank and is cheaper than QR on A.
    _, R, _ = qr(A.T, mode="economic", pivoting=True)
    diag = np.abs(np.diag(R))
    qr_ranks = {thr: int(np.sum(diag > thr)) for thr in (1e-8, 1e-10, 1e-12)}

    nz_min = float(singular[rank_svd - 1]) if rank_svd else 0.0
    null_max = float(singular[rank_svd]) if rank_svd < len(singular) else 0.0
    return {
        "nvars": nvars,
        "rank_svd": rank_svd,
        "defect": nvars - rank_svd,
        "smallest_nonzero_sv": nz_min,
        "largest_null_sv": null_max,
        "qr_ranks": qr_ranks,
    }


def fingerprint(E: np.ndarray) -> Counter:
    n = int(E.shape[0])
    out = Counter()
    for a in range(n):
        for b in range(a + 1, n):
            for c in range(n):
                for d in range(c + 1, n):
                    delta = int((E[a, c] + E[b, d] - E[a, d] - E[b, c]) % n)
                    delta = min(delta, n - delta)
                    out[delta] += 1
    return out


def check(cond: bool, msg: str):
    print(("PASS" if cond else "FAIL") + ": " + msg)
    if not cond:
        raise AssertionError(msg)


def main():
    cases = [
        ((3, 1, 2), 10, 4),
        ((3, 1, 3), 46, 28),
        ((5, 1, 2), 16, 16),
        ((3, 2, 3), 28, 28),
    ]
    certs = {}
    for (p, r, q), expected_h, expected_f in cases:
        E = exponent_matrix(p, r, q)
        Hc = defect_certificate(E)
        Fc = defect_certificate(fourier_exponent_matrix(p**q))
        certs[(p, r, q)] = Hc
        check(Hc["defect"] == expected_h,
              f"defect H_{{{p};{r},{q}}} = {expected_h}")
        check(Fc["defect"] == expected_f,
              f"defect F_{p**q} = {expected_f}")

    c313 = certs[(3, 1, 3)]
    check(c313["rank_svd"] == 630, "H_{3;1,3} rank = 630")
    check(all(v == 630 for v in c313["qr_ranks"].values()),
          "pivoted-QR rank = 630 at 1e-8, 1e-10, 1e-12")
    print("H313_SINGULAR_GAP",
          f"smallest_nonzero={c313['smallest_nonzero_sv']:.12e}",
          f"largest_null={c313['largest_null_sv']:.12e}")

    E313 = exponent_matrix(3, 1, 3)
    fp_h = fingerprint(E313)
    fp_f = fingerprint(fourier_exponent_matrix(27))
    expected = Counter({
        0: 5103,
        9: 19683,
        3: 13122, 6: 13122, 12: 13122,
        1: 6561, 2: 6561, 4: 6561, 5: 6561, 7: 6561,
        8: 6561, 10: 6561, 11: 6561, 13: 6561,
    })
    check(fp_h == expected, "H_{3;1,3} complete 2x2 fingerprint multiplicities")
    check(fp_f == expected, "F_27 complete 2x2 fingerprint multiplicities")
    check(sum(fp_h.values()) == math.comb(27, 2) ** 2,
          "fingerprint counts all C(27,2)^2 minors")

    # Discriminating normalized-log polynomial stated in the manuscript.
    check(all(normalized_log_mod(3, 1, 3, z) == (z + 12*z*z + 3*z*z*z) % 27
              for z in range(27)),
          "ell_{1,3}(z)=z+12z^2+3z^3 mod 27")

    print("PAPERH_RECONSTRUCTED_INVARIANTS_ALL_CHECKS_PASSED")


if __name__ == "__main__":
    main()
\end{lstlisting}

\paragraph{Exact polarized-window checks.}
The publication archive also supplies the following exact verification scripts:
\begin{lstlisting}
code/verify_p762_polarized_windows.py
code/verify_p763_normalizers.py
\end{lstlisting}
They use exact modular arithmetic to replay the scalar-window/radical
classification and to test the normalizer/polarization-rigidity formulas on the
parameter grids stated in the scripts.  The P762 script includes exhaustive
small-ring checks; the P763 script is a sampled exact-arithmetic regression.
These computations verify the displayed formulas but are not substitutes for
the all-parameter proofs.

\sloppy

\end{document}